\newtheorem{theorem}{Theorem}[section]
\newtheorem{lemma}[theorem]{Lemma}
\theoremstyle{definition}
\newtheorem{definition}[theorem]{Definition}
\newtheorem{corollary}[theorem]{Corollary}
\newtheorem{proposition}[theorem]{Proposition}
\theoremstyle{remark}
\numberwithin{equation}{section}
\begin{document}

\def\C{\mathbb C}
\def\R{\mathbb R}
\def\X{\mathbb X}
\def\Z{\mathbb Z}
\def\Y{\mathbb Y}
\def\Z{\mathbb Z}
\def\N{\mathbb N}
\def\cal{\mathcal}
\def\cD{\cal D}
\def\tD{\tilde{{\cal D}}}
\def\F{\cal F}
\def\tf{\tilde{f}}
\def\tg{\tilde{g}}
\def\tu{\tilde{u}}

\def\cal{\mathcal}
\def\b{\mathcal B}
\def\c{\mathcal C}
\def\cc{\mathbb C}
\def\x{\mathbb X}
\def\r{\mathbb R}
\def\uu{(U(t,s))_{t\ge s}}
\def\vv{(V(t,s))_{t\ge s}}
\def\xx{(X(t,s))_{t\ge s}}
\def\yy{(Y(t,s))_{t\ge s}}
\def\zz{(Z(t,s))_{t\ge s}}
\def\ss{(S(t))_{t\ge 0}}
\def\tt{(T(t,s))_{t\ge s}}
\def\rr{(R(t))_{t\ge 0}}

\title[Almost periodic solutions of FDE]{
Almost periodic solutions of periodic linear partial functional differential equations}

\author{Vu Trong Luong}
\address{Department of Mathematics, Tay Bac University, Son La City, Son La, Vietnam}
\email{vutrongluong@gmail.com}

\author{Nguyen Van Minh}

\address{Department of Mathematics and Statistics, University of Arkansas at Little Rock, 2801 S University Ave, Little Rock, AR 72204. USA}
\email{mvnguyen1@ualr.edu}

\thanks{The authors would like to thank the anonymous referee for his/her carefully reading the manuscript and suggestions to improve the paper's presentation.}

\date{\today}
\subjclass[2000]{Primary: 34K06,  34G10; Secondary: 35B15, 35B40}
\keywords{Partial functional differential equation, almost periodicity}

\begin{abstract}
We study conditions for the abstract periodic linear functional differential 
equation $\dot{x}=Ax+F(t)x_t+f(t)$ to have almost periodic 
with the same structure of frequencies as $f$. The main conditions are stated in terms of 
the spectrum of the monodromy operator associated with the equation and the frequencies of the forcing term $f$. 
The obtained results extend recent results on the subject. A discussion on how the results could be extended to the case when $A$ depends on $t$ is given.
\end{abstract}
\maketitle

\section{Introduction} \label{section 1}

In this paper we consider the existence and uniqueness of almost periodic solutions with the same structure of spectrum as $f$ in equations of the following form
\begin{equation}\label{FDE}
\frac{dx(t)}{dt}=Ax(t)+F(t)x_t +f(t),\ \ x \in {\X}, t \in {\R},
\end{equation}
where the (unbounded) linear operator $A$ generates a strongly continuous semigroup and the bounded linear operator $F(t)$ is periodic
and is defined as follows, $x_t\in C_r:=C([-r,0],{\X})$, $x_t(\theta ):=x(t+\theta )$, 
$r>0$ is a given positive real number,
$F(t)\varphi :=\int^0_{-r}d\eta (t,s)\varphi (s), \ \forall \varphi \in C_r$,
$\eta (t,\cdot ) : C_r\to L({\X})$ is periodic in $t$, of bounded variation, and 
$\sup_{t} \| F(t)\| <\infty$, and $f$ is a ${\X}$-valued almost periodic function. A discussion on how the results could be extended to the case when $A$ depends on $t$ periodically will be given at the end of the paper.

\bigskip
In the theory of ordinary differential equations one of the questions that are of interest to many researchers is when exist periodic solutions to equations of the form
$$
\frac{dx}{dt}=B(t)x+f(t), t \in {\R}, x \in {\C}^n, \eqno(F)
$$
where $f$ is periodic, and $B(t)$ is a $n\times n$-matrix that is periodic with the same period as $f(t)$. A famous Massera's Theorem (\cite{mas}) says that Eq. (F) has a periodic solution with the same period as $B$ and $f$ if and only if it has a solution that is bounded on the positive half line. In addition, the periodic solution is unique if $1$ is not an eigenvalue of the monodromy operator.
Since then there have been many efforts to extend this classic result to various classes of equations and functions (see e.g. \cite{bathutrab,burzha,furnaimin,hatkri,murnaimin,murnaimin2,naimin,
naiminmiyshi,naiminshi,shinai}).  We refer the reader to some recent developments \cite{furnaimin,hinnaiminshi,naiminmiyshi,naiminshi,
shinai} and their references for more recent information in this direction. We note that the results on the existence of periodic solutions are usually proved via the existence of fixed points of the monodromy operator (or, period map)  (see e.g. \cite{burzha,lilimli,
shinai}).
Among the research methods used in this direction we note that when $f$ is almost periodic the monodromy operator method is no longer applicable because the system is no longer periodic. Instead, one uses a new method that is based on the concept of {\it evolution semigroups} associated with the evolutionary processes generated by the equations. Also, the requirement that the period of the solutions be the same as that of the forcing term $f$ will be understood as a requirement on the frequencies of the solutions that are not more than those of $f$. This justifies the introduction of the concept of {\it spectrum of a function} that allows us to measure the set of frequencies of a function on the real line. As is known, a fundamental technique of research in the ODE and FDE is variation-of-constants formulas (VCF) in the phase space. In the case of abstract functional differential equations, the VCF in the phase space is no longer valid. Instead, a weak version may make sense. In this short paper we will recall briefly  these concepts and related results in the next section. We will present an extension of the Massera's Theorem for almost periodic solutions of Eq. (\ref{FDE}) (Theorems \ref{the main} and  \ref{cor main1}). We prove that the condition of existence of bounded solutions could be removed and the equations always have a unique almost periodic solutions with frequencies as $f$ if the part of spectrum of the monodromy operator on the unit circle does not intersect the spectrum of $f$. To our best knowledge the results obtained in this paper extends some previous ones in \cite{bathutrab,furnaimin,naimin}, and complements many other results in \cite{bathutrab,miykimnaishi,murnaimin,murnaimin2,naiminmiyshi,ruevu}. In \cite{murnaimin2} the authors showed that if $A$ generates a compact $C_0$-semigroup the existence of almost periodic solutions to Eq. (\ref{FDE}) could be reduced to the finite dimensional case of ODE, so the problem could be thoroughly studied. The novelty of our results obtained in this paper is that we study the problem when $A$ generates any $C_0$-semigroup, (and even more generally, when $A$ is a family of operators that generates a periodic evolutionary process). This makes the part of spectrum on the unit circle more complicated and the nature of the problem is not of finite dimension. Finally, we give a discussion on how the obtained results could be extended to the case when $A$ may depend on time $t$ periodically. In this case without the variation-of-constants in the phase space the main results are still true though their proofs will be adjusted.

\section{Preliminaries} \label{section 2}
\subsection{Notation}
Throughout the paper we will use the following notations: 
${\N}, {\Z},
{\R}, {\C}$ stand for 
the sets of natural, integer, real, complex numbers, respectively. 
$\Gamma$ denotes the unit circle
in the complex plane ${\C}$. 
For any complex number $z$ the notation $\Re z$ stands for its real part.
${\X}$ will denote
a given complex Banach space. Given two Banach spaces 
${\X},{\Y}$ by $L({\X},{\Y})$  we will 
denote the space of all bounded linear operators from  ${\X}$ 
to  ${\Y}$. As usual,  $\sigma (T), \rho (T), R(\lambda ,T)$ 
are the notations of the spectrum, resolvent set and resolvent of the operator
 $T$.
The notations \ $BC({\R},{\X}), BUC({\R},{\X}),
AP({\X})$ \ will stand for the spaces of all ${\X}$-valued
bounded continuous, bounded uniformly continuous functions on ${\R}$ and its subspace of almost 
periodic (in Bohr's sense) functions, respectively.  

\subsection{Circular Spectrum of Functions}
Below we will introduce a transform of a function $g\in L^\infty
(\R,\X)$ on the real line that leads to a concept of spectrum of a
function. This spectrum coincides with the set of
$\overline{e^{isp(g)}}$ if in addition $g$ is  uniformly continuous, where $sp(g)$ denotes the Beurling spectrum of $g$. All results mentioned below on the circular spectrum of a function could be found in \cite{mingasste}.

\bigskip
Let $g\in L^ \infty (\R,\X)$. Consider the complex function ${\cal
S}g(\lambda )$ in $\lambda \in \C \backslash \Gamma$ defined as
\begin{equation}\label{transform}
{\cal S}g(\lambda ):= R(\lambda ,S)g,\quad \lambda \in \C \backslash
\Gamma .
\end{equation}
Since $S$ is a translation, this transform is an analytic function
in $\lambda \in \C \backslash \Gamma $.
\begin{definition}\rm
The {\it circular spectrum} of $g\in L^\infty (\R,\X)$ is defined to
be the set of all $\xi_0\in \Gamma$ such that ${\cal S}g(\lambda )$
has no analytic extension into any neighborhood of $\xi_0$ in the
complex plane. This spectrum of $g$ is denoted by $\sigma (g)$ and
will be called for short {\it the spectrum of $g$} if this does not
cause any confusion. We will denote by $\rho (g)$ the set $\Gamma
\backslash \sigma (g)$.
\end{definition}

\begin{proposition}\label{pro 2.1}
Let $\{ g_n\}_{n=1}^\infty \subset L^\infty (\R,\X)$ such that
$g_n\to g\in L^\infty (\R,\X)$, and let $\Lambda$ be a closed subset
of the unit circle. Then the following assertions hold:
\begin{enumerate}
\item \ $\sigma (g)$ is closed. \item \ If $\sigma (g_{n}) \subset
\Lambda$ for all $n \in {\N}$, then $\sigma (g)\subset \Lambda $.
\item \ $\sigma ({\cal A}g)\subset \sigma (g)$ for every bounded linear operator ${\cal A}$ acting in 
$BUC(\R,\X)$ that commutes with $S$.
\item If $\sigma (g) =\emptyset$, then $g=0$.
\end{enumerate}
\end{proposition}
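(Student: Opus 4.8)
The plan is to treat the four assertions separately, since (i), (iii), (iv) are short consequences of the definition of the transform ${\cal S}$ together with elementary complex analysis, while (ii) carries essentially all the difficulty. For (i) I would show that the regular set $\rho(g)=\Gamma\setminus\sigma(g)$ is open: if $\xi_0\in\rho(g)$, then by definition ${\cal S}g$ admits an analytic extension to some open neighborhood $U$ of $\xi_0$ in $\C$, and then the very same extension witnesses $\xi\in\rho(g)$ for every $\xi\in U\cap\Gamma$. Hence $\rho(g)$ is open and $\sigma(g)$ is closed.

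For (iii) the key observation is that a bounded operator ${\cal A}$ on $BUC(\R,\X)$ commuting with the translation $S$ also commutes with the resolvent, so ${\cal S}({\cal A}g)(\lambda)=R(\lambda,S){\cal A}g={\cal A}R(\lambda,S)g={\cal A}({\cal S}g)(\lambda)$ for all $\lambda\in\C\setminus\Gamma$. Wherever ${\cal S}g$ extends analytically across $\Gamma$, applying the bounded (hence analyticity-preserving) operator ${\cal A}$ produces an analytic extension of ${\cal S}({\cal A}g)$, so $\rho(g)\subset\rho({\cal A}g)$, i.e. $\sigma({\cal A}g)\subset\sigma(g)$. For (iv), if $\sigma(g)=\emptyset$ then ${\cal S}g$ extends analytically across every point of $\Gamma$ and, being already analytic on $\C\setminus\Gamma$, it defines an entire $\X$-valued function. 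From the Neumann series one gets $\|R(\lambda,S)\|\le \mathrm{dist}(\lambda,\Gamma)^{-1}$, so ${\cal S}g(\lambda)\to 0$ as $|\lambda|\to\infty$ and the entire extension is bounded; Liouville's theorem forces it to vanish identically, and then $R(\lambda,S)g=0$ yields $g=(\lambda-S)R(\lambda,S)g=0$.

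The substantial point is (ii). Fix $\xi_0\in\Gamma\setminus\Lambda$; since $\Lambda$ is closed I may choose a closed disk $\bar D=\bar D(\xi_0,\delta)$ with $\bar D\cap\Lambda=\emptyset$. For each $n$ the hypothesis $\sigma(g_n)\subset\Lambda$ guarantees an analytic extension $h_n$ of ${\cal S}g_n$ to $D$. On $D\setminus\Gamma$ one has $h_n(\lambda)=R(\lambda,S)g_n\to R(\lambda,S)g={\cal S}g(\lambda)$ locally uniformly, because $R(\lambda,S)$ is bounded there and $g_n\to g$ in $L^\infty$. The plan is to promote this to convergence on all of $D$ by Vitali's theorem, so that the limit is an analytic extension of ${\cal S}g$ across $\xi_0$ and $\xi_0\in\rho(g)$.

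The obstacle, and the only genuinely hard step, is the local uniform boundedness of $\{h_n\}$ demanded by Vitali, precisely across the arc $\Gamma\cap D$: the naive estimate $\|h_n(\lambda)\|\le\|g_n\|\,\mathrm{dist}(\lambda,\Gamma)^{-1}$ is worthless there since it blows up on $\Gamma$, and a Cauchy-integral bound on a circle crossing $\Gamma$ diverges logarithmically. I would circumvent this by characterising membership $\xi_0\in\rho(g)$ through a condition on $g$ that is manifestly stable under $L^\infty$-limits, namely annihilation $\sum_{k}\varphi_k S^k g=0$ by convolution operators with $\varphi\in\ell^1(\Z)$ whose symbol is non-vanishing near $\xi_0$; since each such operator is bounded on $L^\infty(\R,\X)$ with norm $\|\varphi\|_{\ell^1}$, the defining relation passes to the limit by continuity, giving $\xi_0\in\rho(g)$ directly. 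This route also yields the uniform control missing above: applying it to the differences $g_n-g_m$, whose extension is $h_n-h_m$ by linearity, shows $\{h_n\}$ is uniformly Cauchy on compacta and hence converges to the desired analytic extension. Setting up this convolution description of the regular set (and reconciling it with the resolvent definition) is where the real work of the proposition resides.
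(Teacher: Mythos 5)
Your arguments for (i), (iii) and (iv) are correct and essentially complete: openness of $\rho(g)$ is immediate from the definition; commutation of ${\cal A}$ with $R(\lambda,S)$, hence with any local analytic extension of ${\cal S}g$, gives (iii) (this is exactly the paper's one-line justification of that part); and for (iv) the bound $\|R(\lambda,S)\|\le \mathrm{dist}(\lambda,\Gamma)^{-1}$ does follow from the two Neumann series, since $S$ is an invertible isometry, so the Liouville argument works once you add the routine identity-theorem gluing that makes the local extensions across $\Gamma$ into a single entire function. Note that for (i), (ii), (iv) the paper itself proves nothing: it cites \cite{mingasste}; so any self-contained argument is necessarily a different route from the paper's.

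The genuine gap is in (ii), and you have located it without crossing it. First, the entire content of your argument is the claimed $\ell^1(\Z)$-characterization of $\rho(g)$, which you explicitly defer (``where the real work of the proposition resides''); without that lemma there is no proof. Second, and more seriously, the version you state --- existence of some $\varphi\in\ell^1(\Z)$ with symbol non-vanishing near $\xi_0$ and $\sum_k\varphi_kS^kg=0$ --- is not what your limit argument uses: applied to each $g_n$ it produces a $\varphi_n$ depending on $n$, and then nothing passes to the limit. What the argument requires is the \emph{synthesis} form: a single $\varphi$ with $\hat\varphi(\xi_0)\ne 0$ and $\mathrm{supp}\,\hat\varphi$ disjoint from $\Lambda$, such that $\hat\varphi(S)g'=0$ for \emph{every} $g'$ with $\sigma(g')\subset\Lambda$. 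That the support condition relative to $\Lambda$ (not mere non-vanishing at $\xi_0$) is unavoidable is seen by testing on $g'(t)=e^{i\theta t}x$ with $e^{i\theta}\in\Lambda$: then $\hat\varphi(S)g'=\hat\varphi(e^{i\theta})g'$, so $\hat\varphi$ must vanish on all of $\Lambda$. Proving the synthesis form amounts to showing $\sigma(\hat\varphi(S)g')\subset \mathrm{supp}\,\hat\varphi\cap\sigma(g')$ and invoking (iv), while the converse direction (annihilation plus $\hat\varphi(\xi_0)\ne0$ implies analytic extendability of $R(\lambda,S)g$) rests on a division identity of the type $R(\lambda,S)g=q_\lambda(S)g/\hat\varphi(\lambda)$, which needs $\hat\varphi$ to extend analytically across $\Gamma$ (say $\varphi$ finitely supported) --- whereas the synthesis direction wants $\hat\varphi$ supported in a small arc, hence non-analytic. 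Reconciling these two competing choices of $\varphi$ is precisely the unfinished work. All of this can be carried out (it is the circle analogue of the classical equivalence of the Carleman and Beurling spectra, and is in effect what \cite{mingasste} supplies), but as written your (ii) is a plan with a known-to-be-nontrivial lemma missing, plus a uniformity error in how the lemma would be applied.
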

\begin{proof}
For i), ii) and iv) the proofs are given in \cite{mingasste}. For iii) the proof is obvious from the definition of the circular spectrum.
\end{proof}
\begin{corollary}\label{cor 2}
Let $\Lambda$ be a closed subset of the unit circle and ${\cal F}$ be one of the function spaces $BUC(\R,\X), AP(\X)$. Then, the set
\begin{equation}\label{space Lambda (X)}
\Lambda_\F (\X) := \{ g\in {\cal F}|\ \sigma (g)\subset \Lambda \}
\end{equation}
is a closed subspace of $\F$.
\end{corollary}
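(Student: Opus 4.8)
The plan is to verify the two defining properties separately: that $\Lambda_\F(\X)$ is a linear subspace of $\F$, and that it is norm-closed in $\F$. Both will follow almost directly from the definition of the circular spectrum together with Proposition \ref{pro 2.1}.

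For the linear subspace property, the key observation is that the transform $g\mapsto {\cal S}g(\lambda )=R(\lambda ,S)g$ is linear in $g$ for each fixed $\lambda \in \C \backslash \Gamma$, since the resolvent $R(\lambda ,S)$ is a linear operator. Hence, given $g,h\in \F$ and scalars $\alpha ,\beta$, we have ${\cal S}(\alpha g+\beta h)(\lambda )=\alpha {\cal S}g(\lambda )+\beta {\cal S}h(\lambda )$ throughout $\C \backslash \Gamma$. If $\xi_0\in \Gamma$ lies in both $\rho (g)$ and $\rho (h)$, then ${\cal S}g$ and ${\cal S}h$ both admit analytic extensions to a common neighborhood of $\xi_0$, and so does their linear combination; thus $\xi_0\in \rho (\alpha g+\beta h)$. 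This yields the subadditivity $\sigma (\alpha g+\beta h)\subset \sigma (g)\cup \sigma (h)$. Consequently, if $g,h\in \Lambda_\F(\X)$ then $\sigma (\alpha g+\beta h)\subset \Lambda$, and since $\F$ is itself a linear space, $\alpha g+\beta h\in \Lambda_\F(\X)$.

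For norm-closedness, I would take a sequence $\{ g_n\} \subset \Lambda_\F(\X)$ converging to some $g$ in the norm of $\F$. Because $\F$ (being either $BUC(\R,\X)$ or $AP(\X)$) is a closed subspace of $L^\infty (\R,\X)$, the limit $g$ again belongs to $\F$, and in particular $g_n\to g$ in $L^\infty (\R,\X)$. Since each $\sigma (g_n)\subset \Lambda$ with $\Lambda$ closed, assertion (ii) of Proposition \ref{pro 2.1} applies directly and gives $\sigma (g)\subset \Lambda$. Hence $g\in \Lambda_\F(\X)$, proving the set is closed.

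The argument is essentially a bookkeeping of the definition of $\sigma (\cdot )$ together with Proposition \ref{pro 2.1}, so I do not expect a serious obstacle. The one point requiring a little care is the subadditivity of the spectrum under addition, which is not literally listed among the four assertions of the proposition; however, it follows immediately from the linearity of the transform ${\cal S}$ and the characterization of $\rho (g)$ as the set of points admitting analytic continuation. One should also note that membership of the limit $g$ in $\F$ rests on the standard fact that $BUC(\R,\X)$ and $AP(\X)$ are norm-closed, which is precisely what lets norm convergence in $\F$ supply the $L^\infty$-convergence hypothesis needed to invoke assertion (ii).
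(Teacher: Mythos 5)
Your proof is correct and follows essentially the route the paper intends: the paper states this corollary without proof as an immediate consequence of Proposition \ref{pro 2.1}, with closedness coming from assertion (ii) exactly as you argue, and the subspace property from the linearity of the transform $g\mapsto R(\lambda ,S)g$. Your explicit verification of the subadditivity $\sigma (\alpha g+\beta h)\subset \sigma (g)\cup \sigma (h)$ via analytic continuation is the right way to fill in the detail the paper leaves implicit.
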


\begin{lemma}\label{lem spec of S on Lambda (X)}
Let $\Lambda$ be a closed subset of the unit circle and ${\cal F}$ be one of the function spaces $BUC(\R,\X), AP(\X)$. Then, the
translation operator $S$ leaves the space $\Lambda_\F (\X)$
invariant. Moreover,
\begin{equation}\label{spec of S on Lambda (X)}
\sigma (S|_{\Lambda_\F (\X)})=\Lambda .
\end{equation}
\end{lemma}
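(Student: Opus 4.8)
The plan is to treat the invariance and the spectral identity separately, using throughout the transform ${\cal S}g(\lambda)=R(\lambda,S)g$ and the defining property that $\sigma(g)\subset\Lambda$ precisely when ${\cal S}g$ admits an analytic extension across every point of $\Gamma\backslash\Lambda$. Invariance is immediate: since $S$ is a bounded operator on $BUC(\R,\X)$ commuting with itself, Proposition \ref{pro 2.1} iii) yields $\sigma(Sg)\subset\sigma(g)\subset\Lambda$ for every $g\in\Lambda_\F(\X)$, so $Sg\in\Lambda_\F(\X)$ (that $S$ preserves $\F$ being standard). For \eqref{spec of S on Lambda (X)} I would prove the two inclusions.

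For $\sigma(S|_{\Lambda_\F(\X)})\subset\Lambda$: because $S$ is a surjective isometry its spectrum lies in $\Gamma$, and for $\lambda\notin\Gamma$ the resolvent $R(\lambda,S)$ commutes with $S$, hence leaves $\Lambda_\F(\X)$ invariant by Proposition \ref{pro 2.1} iii), so such $\lambda$ are in the resolvent set of the restriction. It remains to treat $\xi_0\in\Gamma\backslash\Lambda$. Fix $g\in\Lambda_\F(\X)$; by definition ${\cal S}g$ extends analytically near $\xi_0$, and I denote by $h$ the value of that extension at $\xi_0$. Since each ${\cal S}g(\lambda)\in\F$ and $\F$ is closed, $h\in\F$; passing $(\lambda-S){\cal S}g(\lambda)=g$ to the limit (analytic continuation, $S$ bounded) gives $(\xi_0-S)h=g$. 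To see $h\in\Lambda_\F(\X)$, I would use the resolvent equation to compute, for $\lambda\notin\Gamma$,
$$ {\cal S}h(\lambda)=R(\lambda,S)h=\frac{{\cal S}g(\lambda)-h}{\xi_0-\lambda}, $$
whose right-hand side extends analytically across all of $\Gamma\backslash\Lambda$: away from $\xi_0$ because ${\cal S}g$ does and $(\xi_0-\lambda)^{-1}$ is analytic there, and at $\xi_0$ because the numerator vanishes to first order. Thus $\sigma(h)\subset\Lambda$. Injectivity of $\xi_0-S$ on $\Lambda_\F(\X)$ follows since a nonzero $h$ with $Sh=\xi_0h$ would force ${\cal S}h(\lambda)=h/(\lambda-\xi_0)$ to have a pole at $\xi_0\notin\Lambda$, contradicting $\sigma(h)\subset\Lambda$. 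Hence $\xi_0-S$ is a bijection of the Banach space $\Lambda_\F(\X)$ and $\xi_0\in\rho(S|_{\Lambda_\F(\X)})$ by the bounded inverse theorem.

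For $\Lambda\subset\sigma(S|_{\Lambda_\F(\X)})$: given $\xi_0\in\Lambda$, write $\xi_0=e^{i\mu}$ and fix $x\neq 0$ in $\X$. The function $g(t)=e^{i\mu t}x$ lies in $AP(\X)\subset\F$ and satisfies $Sg=\xi_0 g$, while the correspondence of the circular spectrum with the Beurling spectrum gives $\sigma(g)=\{\xi_0\}\subset\Lambda$, so $g\in\Lambda_\F(\X)$. Therefore $\xi_0$ is an eigenvalue of $S|_{\Lambda_\F(\X)}$ and belongs to its spectrum. Combining the two inclusions yields \eqref{spec of S on Lambda (X)}.

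I expect the main obstacle to be the step showing $h\in\Lambda_\F(\X)$, that is, that the pointwise analytic extension of ${\cal S}g$ genuinely lands in the subspace with the correct spectrum and furnishes a bounded resolvent of the restriction. The displayed identity, together with the first-order vanishing of its numerator at $\xi_0$, is the technical heart of the argument; the surjectivity, injectivity, and boundedness assertions are then routine consequences.
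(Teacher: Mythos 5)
Your proof is correct. Note that the paper itself gives no proof of this lemma: it is imported wholesale from \cite{mingasste} (see the remark at the start of the subsection on circular spectrum), so there is no in-paper argument to compare against; your proof is essentially the standard one from that reference, namely proving $\sigma (S|_{\Lambda_\F (\X)})\subset\Lambda$ by showing that the analytic extension of $R(\cdot ,S)g$, evaluated at $\xi_0\in\Gamma\setminus\Lambda$, furnishes the inverse of $\xi_0-S$ on $\Lambda_\F (\X)$, and the reverse inclusion via the eigenfunctions $t\mapsto e^{i\mu t}x$ together with $\sigma (e^{i\mu \cdot}x)=\{e^{i\mu}\}$. The one genuinely delicate point --- that the vector $h$ produced by the extension lies in $\Lambda_\F (\X)$ and not merely in $\F$ --- is exactly where your identity $R(\lambda ,S)h=({\cal S}g(\lambda )-h)/(\xi_0-\lambda )$ with its first-order vanishing numerator does the work, and you handle it correctly.
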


\medskip
Below we will recall the concept of Beurling spectrum of a function. 
We denote by $F$ the Fourier transform, i.e.
\begin{equation}
({F}f)(s):= \int^{+\infty }_{-\infty }e^{-ist}f(t)dt
\end{equation}
$(s\in {\R}, f\in L^1({\R}))$. Then the {\it Beurling spectrum}\index{Beurling spectrum} 
of $u\in BUC({\R},{\X})$ is defined to be the following set
\begin{eqnarray*}
sp(u)&:=& \{ \xi \in {\R}: \forall \epsilon >0 \ \ \exists f\in L^1({\R}),\\
&& \hspace{1cm} supp{F}f \subset (\xi -\epsilon ,\xi +\epsilon ), f*u \not= 0\}
\end{eqnarray*}
where 
$$
f*u(s):=\int^{+\infty }_{-\infty }f(s-t)u(t)dt .
$$
The following result is a consequence of the Weak Spectral Mapping Theorem that relates the circular spectrum and Beurling spectrum of a uniformly continuous function.
\begin{corollary}\label{lem spec and WSM the}
Let $g\in BUC(\R,\X)$. Then
\begin{equation}
\sigma (g)= \overline{e^{i sp(g)}}.
\end{equation}
\end{corollary}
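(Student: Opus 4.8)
The plan is to realize both spectra as data attached to the one-parameter translation group on $BUC(\R,\X)$ and then to pass from one to the other by the Weak Spectral Mapping Theorem (WSMT). Write $(e^{t\mathcal{D}})_{t\in\R}$ for the translation group, $(e^{t\mathcal{D}}g)(s):=g(s+t)$, so that its generator is $\mathcal{D}=d/ds$ and the operator $S$ occurring in the transform (\ref{transform}) is its time-one map, $S=e^{\mathcal{D}}$. This group is isometric, hence bounded, so $\sigma(\mathcal{D})\subset i\R$ and WSMT is available: for every closed subspace $\mathcal{M}$ invariant under the group one has $\sigma(S|_{\mathcal{M}})=\overline{e^{\sigma(\mathcal{D}|_{\mathcal{M}})}}$.

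I would then introduce the smallest closed translation-invariant subspace generated by $g$, namely $\mathcal{M}_g:=\overline{\mathrm{span}}\{ e^{t\mathcal{D}}g:\ t\in\R\}$, which is invariant under both $S$ and $\mathcal{D}$. Two facts drive the argument. First, the generator on $\mathcal{M}_g$ reads off the Beurling spectrum, $\sigma(\mathcal{D}|_{\mathcal{M}_g})=i\,sp(g)$; this is the standard description of $sp(g)$ through the minimal invariant subspace spanned by the translates of $g$, obtained directly from the convolution definition. Second, the circular spectrum of $g$ coincides with the spectrum of $S$ on this cyclic subspace, $\sigma(g)=\sigma(S|_{\mathcal{M}_g})$. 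Granting these, WSMT finishes at once, since
\begin{equation*}
\sigma(g)=\sigma(S|_{\mathcal{M}_g})=\overline{e^{\sigma(\mathcal{D}|_{\mathcal{M}_g})}}=\overline{e^{i\,sp(g)}}.
\end{equation*}

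The identification $\sigma(g)=\sigma(S|_{\mathcal{M}_g})$ is where the work sits, and I would argue it by two inclusions. The inclusion $\sigma(g)\subset\sigma(S|_{\mathcal{M}_g})$ is soft: for $\lambda\in\rho(S|_{\mathcal{M}_g})$ the $\mathcal{M}_g$-valued map $\lambda\mapsto R(\lambda,S|_{\mathcal{M}_g})g$ is analytic and coincides with ${\cal S}g(\lambda)=R(\lambda,S)g$ off $\Gamma$, so ${\cal S}g$ extends analytically across every point of $\Gamma\backslash\sigma(S|_{\mathcal{M}_g})$, placing those points in $\rho(g)$. For the reverse inclusion, suppose $\xi_0\in\rho(g)$, so ${\cal S}g$ extends to a neighborhood $U$ of $\xi_0$ with $(\lambda-S)G(\lambda)=g$ on $U$ by analytic continuation; since $S$ commutes with each $e^{t\mathcal{D}}$, applying $e^{t\mathcal{D}}$ yields an analytic solution on $U$ for every translate, and hence, by linearity together with the closedness and limit properties of the circular spectrum in Proposition \ref{pro 2.1}, for every element of $\mathcal{M}_g$.

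The step I expect to be the main obstacle is precisely this reverse inclusion: promoting the local, element-wise regularity just obtained to the existence of a \emph{bounded} inverse of $\xi-S|_{\mathcal{M}_g}$ for $\xi$ near $\xi_0$. One must control the extension uniformly over the whole cyclic subspace, which is where the cyclicity of $g$ and a uniform resolvent bound — reflecting that $S$ is an isometry and that $g$ is a cyclic vector — enter; this is the single-valued-extension heart of the matter, whereas the characterization $\sigma(\mathcal{D}|_{\mathcal{M}_g})=i\,sp(g)$ and the invocation of WSMT are routine once the group framework is in place. The closure in $\overline{e^{i\,sp(g)}}$ is retained because the exponential image of $i\,sp(g)$ need not already be closed.
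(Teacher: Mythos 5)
Your overall strategy is the intended one: the paper gives no written proof of this corollary, presenting it as ``a consequence of the Weak Spectral Mapping Theorem'' with all details on the circular spectrum delegated to \cite{mingasste}, and the argument there is exactly of the type you outline — realize both spectra through the translation group and its time-one map $S=e^{\mathcal D}$, then apply the weak spectral mapping theorem for bounded $C_0$-groups. Your soft inclusion $\sigma(g)\subset\sigma(S|_{\mathcal M_g})$ is correct as written, and the identification $\sigma(\mathcal D|_{\mathcal M_g})=i\,sp(g)$ is indeed standard (see \cite{hinnaiminshi,levzhi}).

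The genuine gap is the step you yourself flag and then leave unresolved: the inclusion $\sigma(S|_{\mathcal M_g})\subset\sigma(g)$. The route you gesture at — continuing $(\lambda-S)G(\lambda)=g$ analytically, applying $e^{t\mathcal D}$ to obtain, for each finite linear combination $h$ of translates, a local analytic solution of $(\lambda-S)G_h(\lambda)=h$ — cannot close this by itself: element-wise solvability on a dense, non-closed subspace of $\mathcal M_g$ yields neither surjectivity of $\xi_0-S$ on $\mathcal M_g$ nor a lower bound $\|(\xi_0-S)h\|\geq c\|h\|$, and passing from pointwise local resolvents to a bounded inverse is precisely the implication that fails in general local spectral theory; ``cyclicity of $g$'' does not supply the missing uniform bound. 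The gap can, however, be closed with tools already in the paper, with no uniform-extension argument at all. Since $\sigma(g)$ is closed (Proposition \ref{pro 2.1}, part i)) and every translation is a bounded operator commuting with $S$ (Proposition \ref{pro 2.1}, part iii)), every translate of $g$ lies in $\Lambda_\F(\X)$ with $\Lambda:=\sigma(g)$ and ${\cal F}=BUC(\R,\X)$; as that space is closed (Corollary \ref{cor 2}), we get $\mathcal M_g\subset\Lambda_\F(\X)$. Now $S|_{\mathcal M_g}$ is an invertible isometry, so $\sigma(S|_{\mathcal M_g})\subset\Gamma$; having empty interior, this spectrum equals its own topological boundary and hence consists of approximate eigenvalues of $S|_{\mathcal M_g}$. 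Any approximate eigenvector sequence in $\mathcal M_g$ is also one in $\Lambda_\F(\X)$, so $\sigma(S|_{\mathcal M_g})\subset\sigma_{ap}\bigl(S|_{\Lambda_\F(\X)}\bigr)\subset\sigma\bigl(S|_{\Lambda_\F(\X)}\bigr)=\Lambda=\sigma(g)$, the last equality being Lemma \ref{lem spec of S on Lambda (X)}. With this inserted, your chain $\sigma(g)=\sigma(S|_{\mathcal M_g})=\overline{e^{\sigma(\mathcal D|_{\mathcal M_g})}}=\overline{e^{i\,sp(g)}}$ is complete.
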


\subsection{Almost periodic functions}
A subset $E\subset {\R}$ is said to be {\it relatively dense} 
if there exists a number $l>0$ ({\it inclusion length})
such that every interval $[a,a+l]$ contains at least one point of $E$.
Let $f$ be a continuous function on ${\R}$ taking values in a complex Banach space ${\X}$.
$f$ is said to be {\it almost periodic in the sense of Bohr}
if to every $\epsilon >0$ there corresponds a
relatively dense set $T(\epsilon , f)$ ({\it of $\epsilon$-periods })
such that
$$
\sup_{t \in {\R}}\| f(t+\tau )-f(t)\| \le \epsilon  , \ \forall \tau \in T(\epsilon , f).
$$
If $f$ is almost periodic function, then (approximation theorem \cite[Chap. 2]{levzhi}) it can be approximated
uniformly on ${\R}$  by a sequence of trigonometric polynomials, i.e.,
a sequence of functions in $t\in {\R}$ of the form
\begin{equation}\label{trig pol}
P_n(t):=\sum_{k=1}^{N(n)}a_{n,k}e^{i\lambda_{n,k}t}, \ \ n=1,2,...; \lambda_{n,k} \in {\R}, a_{n,k}\in {\X}, t\in {\R}.
\end{equation}
Of course, every function which can be approximated by a sequence of trigonometric polynomials is almost periodic.
Specifically, the exponents of the trigonometric polynomials (i.e., the reals $\lambda_{n,k}$ in (\ref{trig pol})) can be chosen from the set of 
all reals $\lambda$ ({\it Fourier exponents}) such that the following integrals ({\it Fourier coefficients}) 
$$
a(\lambda , f):=\lim_{T\to \infty}\frac{1}{2T}\int^T_{-T}f(t)e^{-i\lambda t}dt
$$
are different from $0$. As is known, there are at most countably such reals $\lambda$, the set of which will
be denoted by $\sigma_b(f)$ and called {\it Bohr spectrum} of $f$. Throughout the paper we will use
the relation $sp(f)= \overline{\sigma_b(f)}$.

\medskip
If $g\in BUC(\R,\X)$ with countable $\sigma (g)$, then its Beurling spectrum $sp(g)$ is also countable by Corollary \ref{lem spec and WSM the}. Therefore, if $\X$ does not contain any space isomorphic to $c_0$ (the space of all numerical sequences converging to zero), the function $g$ is almost periodic (see e.g. \cite{levzhi}). If $\X$ is convex it does not contain $c_0$.

\subsection{Evolutionary processes and the associated evolution semigroups}
\begin{definition}
Let $(U(t,s))_{t\ge s}$ be a two-parameter family of bounded operators in a Banach space $\X$. Then, it is called an evolutionary process if
\begin{enumerate}
\item $U(t,t)=I$\ \ for all \ $t\in {\R}$,
\item $U(t,s)U(s,r)=U(t,r)$\ \ for all \ \ $t\ge s\ge r$,
\item The map \ \ $(t,s)\mapsto U(t,s)x$\ \ is continuous for every fixed \ \ $x
\in {\X}$,
\item $\| U(t,s)\| < Ne^{\omega (t-s)} $ for 
some positive \ $N, \omega $ \ independent of $t \geq s$ .
\end{enumerate}
\end{definition}
An evolutionary process is called {\it 1-periodic} if
\begin{equation*}
U(t+1 ,s+1 )=U(t,s), \ \mbox{for all} \ t \ge s .
\end{equation*}

Recall that for a given 1-periodic evolutionary process
$(U(t,s))_{t\ge s}$ the following operator
\begin{equation*}
M(t):= U(t,t-1), t \in {\R}
\end{equation*}
is called {\it monodromy operator} (or sometime {\it period map,
Poincar\'e map}). Thus we have a family of monodromy operators. We
will denote $M:= M(0)$. The nonzero eigenvalues of $M(t)$  are
called {\it characteristic multipliers}. An important property of
monodromy operators is stated in the following lemma whose proof
can be found in \cite{hen,hinnaiminshi}.
\begin{lemma}\label{lem 5.2}
Under the notation as above the following assertions hold:
\begin{enumerate}
\item $M(t+1) = M(t)$ for all $t$; characteristic multipliers are
independent of time, i.e. the nonzero eigenvalues of $M(t)$ coincide
with those of $M$, \item $\sigma (M(t)) \backslash \{0\}=\sigma (M)
\backslash \{0\}$, i.e., it is independent of $t$, \item If $\lambda
\in \rho (M)$, then the resolvent $R(\lambda , M(t))$ is strongly
continuous, \item If ${\cal M}$ denotes the operator of
multiplication by $M(t)$ in any one of the function spaces $BUC(\R,\X)$ or $AP(\X)$, then
\begin{equation}\label{5.3}
\sigma ({\cal M}) \backslash \{0\} \subset \sigma (M)\backslash
\{0\}.
\end{equation}
\end{enumerate}
\end{lemma}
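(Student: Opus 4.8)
The plan is to reduce all four assertions to a single factorization of the monodromy operators. First I would record the periodicity directly from $1$-periodicity of the process: $M(t+1)=U(t+1,t)=U(t,t-1)=M(t)$, so it is enough to understand $M(t)$ for $t\in[0,1]$. For such $t$ the cocycle law and $1$-periodicity give $M(t)=U(t,t-1)=U(t,0)U(0,t-1)=U(t,0)U(1,t)$, so with $A_t:=U(t,0)$ and $B_t:=U(1,t)$ one has $M(t)=A_tB_t$, whereas $B_tA_t=U(1,t)U(t,0)=U(1,0)=U(0,-1)=M$. Thus each $M(t)$ is an $A_tB_t$ whose companion $B_tA_t$ is exactly $M$. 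Parts (1) and (2) then reduce to the classical fact that $\sigma(A_tB_t)\setminus\{0\}=\sigma(B_tA_t)\setminus\{0\}$ and that the nonzero eigenvalues coincide. The eigenvalue statement I would check by hand: if $A_tB_tx=\mu x$ with $\mu\neq0$, then $B_tx\neq0$ and $B_tA_t(B_tx)=\mu(B_tx)$, and symmetrically; the spectral equality will come for free from the resolvent identity in the next step.

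For (3) I would use the explicit formula $R(\lambda,A_tB_t)=\frac1\lambda\bigl(I+A_tR(\lambda,B_tA_t)B_t\bigr)$, valid for $\lambda\in\rho(B_tA_t)$ with $\lambda\neq0$ (a one-line computation confirms $(\lambda-A_tB_t)$ times the right-hand side equals $I$). Substituting the factorization gives, for $\lambda\in\rho(M)$, $\lambda\neq0$ and $t\in[0,1]$,
\[
R(\lambda,M(t))=\tfrac1\lambda\bigl(I+U(t,0)\,R(\lambda,M)\,U(1,t)\bigr).
\]
Strong continuity in $t$ then follows from the joint continuity of $(t,s)\mapsto U(t,s)x$ and the local boundedness of $\|U(t,s)\|$: for fixed $x$ the map $t\mapsto U(1,t)x$ is continuous, $R(\lambda,M)$ is bounded, and $t\mapsto U(t,0)y(t)$ is continuous whenever $y(\cdot)$ is, by splitting $\|U(t,0)y(t)-U(t_0,0)y(t_0)\|$ into the two standard terms. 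Checking the endpoints $t=0,1$ (where the formula returns $R(\lambda,M)$, using $R(\lambda,M)M=\lambda R(\lambda,M)-I$) shows the $1$-periodic extension remains continuous, which is what (3) asserts.

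For (4) I would exhibit the resolvent of $\mathcal M$ explicitly as the pointwise multiplication operator $(\mathcal R_\lambda g)(t):=R(\lambda,M(t))g(t)$, and verify three things: that $\sup_t\|R(\lambda,M(t))\|<\infty$, that $\mathcal R_\lambda$ maps the function space $\F$ into itself, and that it inverts $\lambda-\mathcal M$. The uniform bound follows from (3): strong continuity on the compact period $[0,1]$ makes $\{R(\lambda,M(t))x:t\in[0,1]\}$ bounded for each $x$, so the uniform boundedness principle and periodicity give $\sup_t\|R(\lambda,M(t))\|<\infty$. The inversion $(\lambda-\mathcal M)\mathcal R_\lambda=\mathcal R_\lambda(\lambda-\mathcal M)=I$ is immediate pointwise. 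Hence, once $\mathcal R_\lambda$ is known to act on $\F$, we get $\lambda\in\rho(\mathcal M)$ for every $\lambda\in\rho(M)$ with $\lambda\neq0$, which is precisely the inclusion $\sigma(\mathcal M)\setminus\{0\}\subset\sigma(M)\setminus\{0\}$.

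The step I expect to be the main obstacle is showing that $t\mapsto R(\lambda,M(t))g(t)$ again lies in $\F$. For $\F=AP(\X)$ I would argue by approximation: writing $g$ as a uniform limit of trigonometric polynomials $\sum_k a_ke^{i\lambda_kt}$, each summand $e^{i\lambda_kt}\,R(\lambda,M(t))a_k$ is the product of a scalar exponential with the continuous $1$-periodic (hence almost periodic) function $t\mapsto R(\lambda,M(t))a_k$, so it is almost periodic, and the uniform bound on $\|R(\lambda,M(t))\|$ passes the limit through to give $\mathcal R_\lambda g\in AP(\X)$. The case $\F=BUC(\R,\X)$ is genuinely more delicate, because uniform continuity of the product forces one to control $\sup_s\|\bigl(R(\lambda,M(s+u))-R(\lambda,M(s))\bigr)g(s)\|$ as $u\to0$ over a range $\{g(s)\}$ that need not be precompact; here I would exploit periodicity to restrict the "$M$-factor" to $s\in[0,1]$ together with the joint continuity of $U$ from (3), and it is precisely this uniform control over the range of $g$ that I would treat most carefully and where the specific structure of $R(\lambda,M(t))$ enters.
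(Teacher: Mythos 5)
Your proof of assertions (1)--(3) is correct, and it is essentially the argument in the sources to which the paper delegates this lemma: the paper gives no proof of its own, only the citation of \cite{hen,hinnaiminshi}, and the proof in \cite{hen} is precisely your factorization $M(t)=A_tB_t$, $M=B_tA_t$ with $A_t=U(t,0)$, $B_t=U(1,t)$, combined with the resolvent identity $R(\lambda,A_tB_t)=\lambda^{-1}\bigl(I+A_tR(\lambda,B_tA_t)B_t\bigr)$; your endpoint check at $t=0,1$ and the splitting argument for strong continuity are sound. (Like those sources, your formula says nothing about $\lambda=0$; since every assertion of the lemma is ``modulo $\{0\}$'', that is the intended reading of (3) as well.) The $AP(\X)$ half of assertion (4) is also complete: the uniform bound $\sup_t\|R(\lambda,M(t))\|<\infty$ via uniform boundedness and periodicity, the pointwise inversion, and the trigonometric-polynomial approximation, using that $t\mapsto R(\lambda,M(t))a$ is continuous and $1$-periodic, hence almost periodic, close that case.

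The step you singled out as the main obstacle --- that ${\cal R}_\lambda$, equivalently ${\cal M}$ itself, maps $BUC(\R,\X)$ into itself --- is not merely delicate: for a general $1$-periodic evolutionary process it is \emph{false}, so this part cannot be completed as stated. Concretely, take $\X=\bigoplus_{n\ge 1}\C^2$ (the $\ell^2$-sum) and $U(t,s)=\bigoplus_n P_n(t)e^{(t-s)B}P_n(s)^{-1}$, where $P_n(t)$ is the planar rotation by the angle $2\pi nt$ and $B=\mathrm{diag}(\log 2,-\log 2)$; this is a $1$-periodic, exponentially bounded, strongly continuous evolutionary process with $M(t)=\bigoplus_n P_n(t)\,\mathrm{diag}(2,1/2)\,P_n(t)^{-1}$. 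Let $g\in BUC(\R,\X)$ be the ``moving bump'' whose $n$-th component is $\chi_n(t)e_1$, with $\chi_n$ the tent function of height $1$ supported on $[n-1,n+1]$; $g$ is even Lipschitz. At $t=m$ and $t'=m+1/(4m)$ the $m$-th components of ${\cal M}g$ are $2e_1$ and $(1-\frac{1}{4m})\frac{1}{2}e_1$ respectively, so $\|{\cal M}g(t)-{\cal M}g(t')\|\ge 1$ while $|t-t'|\to 0$; hence ${\cal M}g\notin BUC(\R,\X)$, and the same computation with $\mathrm{diag}\bigl((\lambda-2)^{-1},(\lambda-1/2)^{-1}\bigr)$ shows ${\cal R}_\lambda$ fails to preserve $BUC$ as well. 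The obstruction is exactly the one you identified: $M(\cdot)$ is only strongly continuous, while the range of $g$ is bounded but not precompact. So the gap lies in the statement rather than in your strategy: assertion (4) should be read on $BC(\R,\X)$ (bounded continuous functions), where your argument closes verbatim because strong continuity plus the uniform bound give continuity of the pointwise products and the pointwise inversion, or on translation-invariant subspaces such as $AP(\X)$ where your approximation argument applies. This weaker version is all that is actually needed downstream: in the proof of Lemma \ref{lem spectral inclusion} the resolvent $R(\lambda,{\cal M})$ may be taken in $BC(\R,C_r)$, and the function it produces is identified a posteriori with the $BUC$ function $R(\lambda,S)\omega$.
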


\medskip
Given an evolutionary process $(U(t,s))_{t\ge s}$, the following semigroup $(T^h)_{h\ge 0}$ is called its associated evolution semigroup
\begin{equation}
T^hg:= U(t,t-h)g(t-h), \ t\in \R , g\in BUC(\R,\X) .
\end{equation}
In general, the evolution semigroup associated with a 1-periodic evolutionary process may not be strongly continuous in the whole space $BUC(\R,\X)$, but in a closed subspace $F$ that includes all elements of $AP(\X)$ and mild solutions in the above sense (see e.g. \cite{bathutrab}, \cite{naimin}). To describe the evolution semigroup associated with a given $(U(t,s))_{t\ge s}$ we consider the following integral equation
\begin{equation}\label{int equ}
u(t)=U(t,s)u(s) +\int^t_s U(t,\xi )f(\xi )d\xi ,\ \mbox{for all} \ \ t \ge s ,
\end{equation}
where $f$ is an element of $BUC(\R,\X)$. We recall the following linear operator  
${\mathcal L}: D({\mathcal L})\subset BUC({\R},{\X}) \to BUC({\R},{\X})$, where $D({\mathcal L})$
consists of all solutions of Eq.(\ref{int equ}) $u(\cdot ) \in BUC({\R},{\X})$  with some $f\in BUC({\R},{\X})$. If $u\in D({\cal L})$, then we define ${\mathcal L}u(\cdot ):= f$.
This operator ${\mathcal L}$ is well defined as a singled-valued operator and is obviously an extension of the differential
operator $d/dt -A$ (see e.g. \cite{murnaimin}). Below, by abuse of notation, we will use the same notation ${\mathcal L}$
to designate its restriction to closed subspaces of $BUC({\R},{\X})$ if this does not make any confusion.

\medskip
If $(T(t))_{t\ge 0}$ is a $C_0$-semigroup in a Banach space $\X$, then $U(t,s):= T(t-s)$  determines a 1-periodic evolutionary process.
\subsection{Mild solutions of Eq.(\ref{FDE}) and a variation of constants formula}
\begin{definition}
A continuous function $u(\cdot )$ on ${\R}$ is said to be a mild solution on ${\R}$ of 
Eq.(\ref{FDE}) with initial $\phi \in C_r$, and is denoted by $u(\cdot ,s, \phi ,f)$
if $u_s =\phi$ and for all $t > s $
\begin{equation}\label{mild solution}
u(t)=T(t-s)\phi (0)+\int^t_sT(t-\xi )[F(\xi )u_\xi +f(\xi )]d\xi .
\end{equation}
\end{definition}
A function $u\in BC(\R,\X)$ is said to be a mild solution of (\ref{FDE}) on $\R$ if
\begin{equation}\label{mild solution2}
u(t)=T(t-s)u(s)+\int^t_sT(t-\xi )[F(\xi )u_\xi +f(\xi )]d\xi , \ \mbox{for all} \ t\ge s .
\end{equation}

Below we will denote by $\mathcal F$ the operator acting on $BUC({\R},{\X})$ defined by the formula
$$
{\mathcal F}u(\xi ):= F(\xi )u_\xi , \ \forall u \in BUC({\R},{\X}).
$$
The following results can be verified directly following the lines in \cite{bathutrab,minrabsch,naimin}.
\begin{lemma}\label{C2S1 lemma 2.1.1}
Let $(T^h)_{h\ge 0}$ be the evolution semigroup associated with a given strongly continuous semigroup
$(T(t))_{t\ge s}$ and 
${\mathcal S}$ denote the space of all elements of $BUC({\R},{\X})$ at which $(T^h)_{h\ge 0}$
is strongly continuous. Then the following assertions hold true:
\begin{enumerate}
\item \ Every mild solution $u\in BUC({\R},{\X})$ of Eq.(\ref{FDE}) is an element of  ${\mathcal S}$, 
\item \ $AP({\X})\subset {\mathcal S}$,
\item \ For the infinitesimal
generator ${\mathcal G}$ of $(T^h)_{h\ge 0}$ in the space ${\mathcal S}$
one has the relation: ${\mathcal  G}g=-{\mathcal L}g$ if $g \in D({\mathcal G})$.
\end{enumerate}
\end{lemma}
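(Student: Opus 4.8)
The plan is to take the three assertions in turn, relying on the defining integral equation (\ref{mild solution2}) for (i), on the compactness of the range of an almost periodic function for (ii), and on the fundamental identity for $C_0$-semigroups for (iii). Throughout I use that here $U(t,s)=T(t-s)$, so that $T^hg(t)=T(h)g(t-h)$, and that property (4) of an evolutionary process gives $\|T(\tau)\|\le Ne^{\omega\tau}$. For assertion (i), I would first note that for a mild solution $u\in BUC(\R,\X)$ the forcing term $g:=\mathcal Fu+f$ lies in $BUC(\R,\X)$, since $F(\cdot)$ is bounded and $1$-periodic while $u$ is bounded and uniformly continuous (so $\xi\mapsto u_\xi$ is too), and $f\in AP(\X)\subset BUC(\R,\X)$. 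Writing (\ref{mild solution2}) with $s=t-h$ yields
\begin{equation*}
u(t)-T^hu(t)=\int_{t-h}^t T(t-\xi)g(\xi)\,d\xi ,
\end{equation*}
whose right-hand side is dominated by $hNe^{\omega h}\|g\|_\infty$; this tends to $0$ as $h\to 0^+$ uniformly in $t$, so $\|T^hu-u\|_\infty\to 0$ and $u\in\mathcal S$.

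For assertion (ii), fix $g\in AP(\X)$ and split
\begin{equation*}
T^hg(t)-g(t)=T(h)\big[g(t-h)-g(t)\big]+\big[T(h)-I\big]g(t).
\end{equation*}
The first term is bounded by $Ne^{\omega h}\sup_t\|g(t-h)-g(t)\|$, which is uniformly small because an almost periodic function is uniformly continuous and $\|T(h)\|$ stays bounded near $h=0$. For the second term the decisive fact is that the range $\overline{\{g(t):t\in\R\}}$ of an almost periodic function is compact in $\X$, together with the standard fact that a $C_0$-semigroup converges to the identity uniformly on compact sets; hence $\sup_t\|[T(h)-I]g(t)\|\le\sup_{x\in\overline{\{g(t)\}}}\|T(h)x-x\|\to 0$. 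Adding the two estimates gives $T^hg\to g$ uniformly, i.e.\ $g\in\mathcal S$.

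For assertion (iii), suppose $g\in D(\mathcal G)$ and set $w:=-\mathcal Gg$, which lies in $\mathcal S\subset BUC(\R,\X)$ since $\mathcal G$ maps into its space of strong continuity. The fundamental identity for the $C_0$-semigroup $(T^h)_{h\ge0}$ on $\mathcal S$ gives $T^hg-g=\int_0^h T^\tau(\mathcal Gg)\,d\tau=-\int_0^h T^\tau w\,d\tau$ for every $h\ge0$. Evaluating at a point $t$ (evaluation being a bounded operator $BUC(\R,\X)\to\X$, so it commutes with the Bochner integral), unwinding $T^\tau w(t)=T(\tau)w(t-\tau)$, and substituting $\xi=t-\tau$ turn this into
\begin{equation*}
T(h)g(t-h)-g(t)=-\int_{t-h}^t T(t-\xi)w(\xi)\,d\xi .
\end{equation*}
Taking $h=t-s$ for arbitrary $t\ge s$ rearranges to precisely the integral equation (\ref{int equ}) with forcing $w$; thus $g\in D(\mathcal L)$ and $\mathcal Lg=w=-\mathcal Gg$, which is the asserted relation $\mathcal Gg=-\mathcal Lg$.

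I expect the main obstacle to be the uniformity required in assertion (ii): pointwise strong continuity of $(T(h))_{h\ge0}$ does not by itself deliver $\sup_t\|[T(h)-I]g(t)\|\to 0$, and it is exactly the precompactness of the range of an almost periodic function (combined with uniform strong continuity on compacta) that bridges the gap. Assertion (iii) is conceptually the heart of the lemma, but it becomes routine once the semigroup identity is unwound pointwise and the parameter is specialized to $h=t-s$, while assertion (i) is a direct estimate on the defining integral equation.
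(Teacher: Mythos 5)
Your proof is correct, and it takes the same route as the paper, which in fact offers no proof of this lemma at all but simply defers to the references \cite{bathutrab,minrabsch,naimin}: the three arguments you give (the integral estimate $\|u-T^hu\|_\infty\le hNe^{\omega h}\|\mathcal Fu+f\|_\infty$ for (i), precompactness of the range of an almost periodic function plus uniform convergence of $T(h)\to I$ on compacta for (ii), and evaluating the fundamental identity $T^hg-g=\int_0^hT^\tau\mathcal Gg\,d\tau$ pointwise and substituting $h=t-s$ to recover Eq.~(\ref{int equ}) for (iii)) are exactly the standard verifications found in those works. The only point worth noting is that in (i) you do not actually need $\mathcal Fu+f$ to be continuous, only bounded, so the implicit regularity of $t\mapsto F(t)u_t$ (which the paper itself assumes tacitly in defining $\mathcal F$ on $BUC(\R,\X)$) is not essential to your estimate.
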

For bounded uniformly continuous mild solutions $x(\cdot )$ the following characterization
is very useful:
\begin{theorem}\label{the mild solution}
$x(\cdot )$ is a bounded uniformly continuous mild solution of Eq.(\ref{FDE}) if and only if ${\mathcal L}x(\cdot )={\mathcal F}x(\cdot )+f$.
\end{theorem}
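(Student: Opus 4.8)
The plan is to establish both implications by directly comparing the two integral equations involved, after unwinding the definitions of $\mathcal L$ and $\mathcal F$. The crucial remark is that the operator $\mathcal L$ attached to Eq.(\ref{FDE}) is built from the evolutionary process generated by $A$, namely $U(t,s)=T(t-s)$ (so that $\mathcal L$ extends $d/dt-A$); hence the equation (\ref{int equ}) characterizing membership in $D(\mathcal L)$ reads
$$
u(t)=T(t-s)u(s)+\int^t_s T(t-\xi )({\mathcal L}u)(\xi )\,d\xi ,\quad t\ge s ,
$$
which is formally identical to the mild-solution identity (\ref{mild solution2}) once the forcing terms are matched. Since ${\mathcal F}u(\xi )=F(\xi )u_\xi$ by definition, matching the forcing amounts to observing that the bracket $F(\xi )u_\xi +f(\xi )$ in (\ref{mild solution2}) is exactly ${\mathcal F}u(\xi )+f(\xi )$.

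First I would prove the forward implication. Let $x(\cdot )$ be a bounded uniformly continuous mild solution, so that (\ref{mild solution2}) holds. Replacing $F(\xi )x_\xi$ by ${\mathcal F}x(\xi )$ turns (\ref{mild solution2}) into
$$
x(t)=T(t-s)x(s)+\int^t_s T(t-\xi )\big[{\mathcal F}x(\xi )+f(\xi )\big]\,d\xi ,\quad t\ge s .
$$
Here ${\mathcal F}x\in BUC({\R},{\X})$ because $\mathcal F$ is an operator on $BUC({\R},{\X})$, and $f\in AP({\X})\subset BUC({\R},{\X})$, so the forcing term ${\mathcal F}x+f$ lies in $BUC({\R},{\X})$; the displayed identity then exhibits $x$ as a solution of (\ref{int equ}) with this forcing. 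By the definition of $D(\mathcal L)$, together with the single-valuedness of $\mathcal L$ recalled in the text, this says precisely that $x\in D(\mathcal L)$ and ${\mathcal L}x={\mathcal F}x+f$.

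For the converse I would read the same chain of equalities backwards. If ${\mathcal L}x={\mathcal F}x+f$, then in particular $x\in D(\mathcal L)\subset BUC({\R},{\X})$, so $x$ is bounded and uniformly continuous; moreover, by the definition of $\mathcal L$ the function $x$ satisfies (\ref{int equ}) with forcing ${\mathcal F}x+f$, and substituting back ${\mathcal F}x(\xi )=F(\xi )x_\xi$ recovers (\ref{mild solution2}). Hence $x$ is a bounded uniformly continuous mild solution. I do not anticipate any genuine obstacle, since the statement is essentially a reformulation of the defining integral equations; the only points demanding attention are the membership ${\mathcal F}x+f\in BUC({\R},{\X})$, which ensures that both sides of ${\mathcal L}x={\mathcal F}x+f$ live in the same space, and the single-valuedness of $\mathcal L$, which makes the identification of the forcing term unambiguous.
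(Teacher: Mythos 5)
Your proof is correct and coincides with what the paper intends: the paper states this theorem without proof (noting it "can be verified directly"), and the intended verification is precisely your unwinding of the definitions — identifying $\mathcal L$ as the operator built from the process $U(t,s)=T(t-s)$, so that (\ref{mild solution2}) is literally (\ref{int equ}) with forcing ${\mathcal F}x+f\in BUC({\R},{\X})$, and invoking single-valuedness of $\mathcal L$. Your explicit attention to the two points the paper glosses over (that ${\mathcal F}x+f$ lies in $BUC({\R},{\X})$, and which evolutionary process $\mathcal L$ is attached to, given that a second process acts in $C_r$) is a welcome clarification rather than a deviation.
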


\medskip
As is well known, the homogeneous equation associated with (\ref{FDE}) generates an evolutionary process $(U(t,s))_{t\ge s}$ in the space $C_r =C([-r,0],\X)$. In fact,
\begin{equation}
U(t,s): C_r \ni \phi \mapsto u_t \in C_r ,
\end{equation}
where $u$ is the solution of the equation
\begin{eqnarray*}
u(\tau ) &=& T(\tau -s) \phi (0) +\int^\tau _s T(\tau -\xi )F(\xi )u_\xi d\xi , \ \tau \ge s, \\
u_s &=& \phi .
\end{eqnarray*}
We introduce a function 
$\Gamma^n$ defined by 
$$\Gamma^n(\theta)=\left\{
\begin{array}{cc}
(n\theta+1)I,&\qquad -1/n\leq \theta\leq 0\\
&\\
0,&\theta<-1/n,
\end{array}
\right.
$$
where $n$ is any positive integer and $I$ is the identity operator on ${\mathbb X}$.  
Since the evolutionary process $(U(t,s))_{t\ge s}$ is strongly continuous,
the ${C_r}$-valued function $U(t,s)\Gamma^nf(s)$ is 
continuous in $s\in (-\infty, t]$ whenever $f\in {\rm BC}({\mathbb R},{\mathbb X}) .$ 

\medskip
The following theorem, whose proof could be found in \cite{murnaimin2}, is a variation of constant formula for solutions of (\ref{FDE}) in the phase space $C_r$:
\begin{theorem}\label{the vcf}
The segment $u_t(s,\phi;f)$ of solution $u(\cdot,s,\phi,f)$ of (\ref{FDE}) satisfies the following relation in $C_r$:
\begin{equation}\label{vcf}
u_t(s,\phi;f)=U(t,s)\phi
+\lim_{n\to \infty}\int_{s}^tU(t,\xi )\Gamma^nf(\xi )d\xi , \qquad t\ge s .
\end{equation}
Moreover, the above limit exists uniformly for bounded $|t-s |$.
\end{theorem}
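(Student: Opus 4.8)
The plan is to reduce to zero initial data and then to show that the phase-space integral approximates, up to a boundary error of order $1/n$, the segment of a genuine mild solution. First I would dispose of the homogeneous part: by the definition of the process $U(t,s)\phi=v_t$, where $v=v(\cdot,s,\phi,0)$ solves the homogeneous equation, so that $w:=u(\cdot,s,\phi,f)-v$ satisfies $w_s=0$ together with
\begin{equation*}
w(t)=\int_s^t T(t-\xi)F(\xi)w_\xi\,d\xi+\int_s^t T(t-\xi)f(\xi)\,d\xi,\qquad t\ge s.
\end{equation*}
It then suffices to prove that $W_n(t):=\int_s^t U(t,\xi)\Gamma^n f(\xi)\,d\xi$ converges to $w_t$ in $C_r$, uniformly for bounded $|t-s|$.

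Next I would derive an integral equation for the endpoint value $\omega_n(t):=[W_n(t)](0)$. Writing $V^{n,\xi}$ for the homogeneous solution with $V^{n,\xi}_\xi=\Gamma^n f(\xi)$, so that $U(t,\xi)\Gamma^n f(\xi)=V^{n,\xi}_t$, the identity $[\Gamma^n f(\xi)](0)=f(\xi)$ gives
\begin{equation*}
V^{n,\xi}(t)=T(t-\xi)f(\xi)+\int_\xi^t T(t-\sigma)F(\sigma)V^{n,\xi}_\sigma\,d\sigma .
\end{equation*}
Integrating in $\xi$, interchanging the order of integration over the triangle $s\le\xi\le\sigma\le t$ (legitimate by joint continuity and boundedness on compacta), and using the identity $\int_s^\sigma V^{n,\xi}_\sigma\,d\xi=W_n(\sigma)$ together with the boundedness of $F(\sigma)$, I would obtain
\begin{equation*}
\omega_n(t)=\int_s^t T(t-\xi)f(\xi)\,d\xi+\int_s^t T(t-\sigma)F(\sigma)W_n(\sigma)\,d\sigma .
\end{equation*}

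The decisive step, which I expect to be the main obstacle, is to compare $W_n(t)$ with the segment $(\omega_n)_t$ of its own endpoint trace. For $\theta\in[-r,0]$ one has $[W_n(t)](\theta)=\int_s^t V^{n,\xi}(t+\theta)\,d\xi$, whereas (extending $\omega_n$ by zero on $(-\infty,s)$) $(\omega_n)_t(\theta)=\int_s^{t+\theta}V^{n,\xi}(t+\theta)\,d\xi$ when $t+\theta\ge s$; their difference $\int_{t+\theta}^t V^{n,\xi}(t+\theta)\,d\xi$ runs over those $\xi$ for which $t+\theta$ lies in the initial interval of $V^{n,\xi}$, so that $V^{n,\xi}(t+\theta)=\Gamma^n(t+\theta-\xi)f(\xi)$. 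Since $\Gamma^n$ is supported in $[-1/n,0]$ and bounded by $1$, this difference is bounded in norm by $\|f\|_\infty/n$, uniformly in $\theta$ and $t$; the case $t+\theta<s$ is handled the same way. Hence $\|W_n(\sigma)-(\omega_n)_\sigma\|_{C_r}=O(1/n)$.

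Finally I would close the argument by comparison with $w$. Inserting the bound $\|W_n(\sigma)-(\omega_n)_\sigma\|_{C_r}=O(1/n)$ into the integral equation for $\omega_n$ shows that $\omega_n$ satisfies the integral equation for $w$ up to a term $E_n(t)$ with $\|E_n(t)\|\le C|t-s|\,n^{-1}$, where $C$ bounds $\|T(t-\sigma)\|\,\|F(\sigma)\|$ on the interval in question. Subtracting the equation for $w$ and applying a Gronwall estimate to $\sup_{s\le\tau\le t}\|\omega_n(\tau)-w(\tau)\|$ (both functions vanishing on $[s-r,s]$) yields $\omega_n\to w$ uniformly for bounded $|t-s|$. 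Together with the previous $O(1/n)$ bound this gives
\begin{equation*}
\|W_n(t)-w_t\|_{C_r}\le\|W_n(t)-(\omega_n)_t\|_{C_r}+\|(\omega_n)_t-w_t\|_{C_r}\longrightarrow 0,
\end{equation*}
uniformly for bounded $|t-s|$, which is precisely the claimed variation-of-constants formula $u_t=U(t,s)\phi+\lim_{n\to\infty}W_n(t)$.
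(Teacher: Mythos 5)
Your proof is correct, but it cannot be compared line-by-line with the paper, because the paper gives no proof of this theorem at all: it is stated with a pointer to \cite{murnaimin2}, where the variation-of-constants formula in the phase space is established. What your proposal does is supply, in a self-contained way, exactly the argument the paper outsources, and it is essentially the same approximation idea as in the cited source: the tent functions $\Gamma^n$ play the role of approximations to the discontinuous ``fundamental solution'' initial datum (the operator-valued function equal to $I$ at $0$ and $0$ on $[-r,0)$, which does not belong to $C_r$), and one shows that the resulting phase-space integrals converge to the segment of the genuine mild solution. Your four steps are all sound: the reduction to $\phi=0$ by linearity is immediate from the definition $U(t,s)\phi=v_t$; the Fubini interchange on the triangle $s\le\xi\le\sigma\le t$ is legitimate because $(\sigma,\xi)\mapsto U(\sigma,\xi)\Gamma^n f(\xi)$ is jointly continuous and locally bounded (the paper itself records the continuity in $\xi$, and property (iv) of evolutionary processes gives the bound), yielding the identity $\omega_n(t)=\int_s^t T(t-\xi)f(\xi)\,d\xi+\int_s^t T(t-\sigma)F(\sigma)W_n(\sigma)\,d\sigma$; the decisive estimate $\|W_n(\sigma)-(\omega_n)_\sigma\|_{C_r}\le \|f\|_\infty/n$ is correct and is the heart of the matter, since on the overlap range $\xi\in[t+\theta,t]$ the integrand is the initial-segment value $\Gamma^n(t+\theta-\xi)f(\xi)$, supported in $\xi\in[t+\theta,t+\theta+1/n]$ and bounded by $\|f\|_\infty$; and the closing Gronwall comparison (using that both $\omega_n$, extended by zero, and $w$ vanish on $[s-r,s]$, so $\|(d_n)_\sigma\|_{C_r}\le\sup_{s\le\tau\le\sigma}\|d_n(\tau)\|$) gives convergence that is uniform for bounded $|t-s|$, which is precisely the uniformity asserted in the theorem. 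The only point worth flagging is implicit in both your argument and the paper's framework: the integrals $\int_s^t T(t-\sigma)F(\sigma)\psi_\sigma\,d\sigma$ require $\sigma\mapsto F(\sigma)\psi_\sigma$ to be (at least strongly measurable and) integrable, which is a standing well-posedness hypothesis on $\eta(t,\cdot)$ rather than something to be proved here; granting it, your proof is complete.
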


\section{Existence of almost periodic solutions of Eq.(\ref{FDE})}
The result below is an upper estimate of the spectrum of a mild solution to (\ref{FDE}) that  is a key to understand the behavior of a bounded and uniformly continuous mild solution of (\ref{FDE}).
\begin{lemma}\label{lem spectral inclusion}
Let $u$ be a bounded and uniformly continuous mild solution of the equation (\ref{FDE}). Then, the following estimate holds
\begin{equation}\label{spec est}
\sigma (u) \subset \sigma_\Gamma (M) \cup \sigma (f).
\end{equation}
where $\sigma _\Gamma (M):=\{ z\in \C : \ |z| =1, z \in \sigma (M)\}  .$
\end{lemma}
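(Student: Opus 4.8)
The plan is to pass from the $\X$-valued solution $u$ to its $C_r$-valued segment function $v(t):=u_t$ and to exploit the variation-of-constants formula over one period together with the resolvent $R(\lambda,S)$ that defines the circular spectrum. First I would record that $v\in BUC(\R,C_r)$: both boundedness and uniform continuity of $v$ follow at once from those of $u$, since $\|v(t)-v(s)\|_{C_r}=\sup_{\theta\in[-r,0]}\|u(t+\theta)-u(s+\theta)\|$. Evaluating a segment at $\theta=0$ recovers $u$, i.e. $u=\mathcal A v$ where $\mathcal A$ is composition with $\delta_0:C_r\to\X,\ \phi\mapsto\phi(0)$; since $\mathcal A$ is bounded and intertwines the period-one translation $S$, the argument of Proposition \ref{pro 2.1}(iii) gives $\sigma(u)\subset\sigma(v)$, so it suffices to estimate $\sigma(v)$.

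Next I would derive a difference equation for $v$. Applying Theorem \ref{the vcf} with $s=t-1$ and using $1$-periodicity of the process (so that $U(t,t-1)=M(t)$) yields
$$ v(t)=M(t)v(t-1)+\tilde g(t),\qquad \tilde g(t):=\lim_{n\to\infty}\int_{t-1}^{t}U(t,\xi)\Gamma^n f(\xi)\,d\xi . $$
Writing $[Sw](t)=w(t+1)$ for the translation used in the definition of the circular spectrum and $\mathcal M$ for multiplication by $M(\cdot)$, replacing $t$ by $t+1$ and invoking $M(t+1)=M(t)$ (Lemma \ref{lem 5.2}(i)) rearranges this to
$$ (S-\mathcal M)v=g,\qquad g:=S\tilde g . $$
Here $\tilde g$ is produced from $f$ by an operator that commutes with $S$ (a change of variables using $U(t+1,\xi+1)=U(t,\xi)$ shows this), so by the argument of Proposition \ref{pro 2.1}(iii) one has $\sigma(g)=\sigma(\tilde g)\subset\sigma(f)$. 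I would also note that $\mathcal M$ is bounded on $BUC(\R,C_r)$ (as $\|M(t)\|\le Ne^{\omega}$), that $M(t+1)=M(t)$ forces $S\mathcal M=\mathcal M S$, and that by Lemma \ref{lem 5.2}(iv) its spectrum satisfies $\sigma(\mathcal M)\subset\sigma(M)\cup\{0\}$.

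The decisive step is a resolvent computation. For $\lambda\in\C\setminus\Gamma$ the operator $R(\lambda,S)$ exists and commutes with $\mathcal M$; applying it to $(S-\mathcal M)v=g$ and using $SR(\lambda,S)=\lambda R(\lambda,S)-I$ gives, with $\mathcal S v(\lambda)=R(\lambda,S)v$,
$$ (\lambda-\mathcal M)\,\mathcal S v(\lambda)=v+\mathcal S g(\lambda),\qquad \mbox{hence}\qquad \mathcal S v(\lambda)=R(\lambda,\mathcal M)[\,v+\mathcal S g(\lambda)\,] $$
for every $\lambda\notin\sigma(\mathcal M)$. Now fix $\xi_0\in\Gamma\setminus(\sigma_\Gamma(M)\cup\sigma(f))$. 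Since $\xi_0\in\Gamma$ we have $\xi_0\neq 0$ and $\xi_0\notin\sigma(M)$, hence $\xi_0\notin\sigma(M)\cup\{0\}\supset\sigma(\mathcal M)$, so $\lambda\mapsto R(\lambda,\mathcal M)$ is analytic near $\xi_0$; moreover $\mathcal S g(\lambda)$ extends analytically across $\xi_0$ because $\xi_0\notin\sigma(g)$. Consequently the right-hand side, and therefore $\mathcal S v(\lambda)$, extends analytically to a neighbourhood of $\xi_0$, i.e. $\xi_0\notin\sigma(v)$. This proves $\sigma(v)\subset\sigma_\Gamma(M)\cup\sigma(f)$, and with $\sigma(u)\subset\sigma(v)$ the claimed estimate follows.

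I expect the only genuine obstacle to be the rigorous justification of the difference equation: that the segment $v$ really lies in $BUC(\R,C_r)$, that the limit defining $\tilde g$ exists in $C_r$ and yields a bounded uniformly continuous function (this is exactly where Theorem \ref{the vcf} and the cut-offs $\Gamma^n$ are needed), and that $f\mapsto\tilde g$ intertwines the translations so that $\sigma(g)\subset\sigma(f)$. Once the identity $(S-\mathcal M)v=g$ is secured, the resolvent manipulation and the analytic-continuation argument are routine, the passage $\sigma(\mathcal M)\subset\sigma(M)\cup\{0\}$ being supplied by Lemma \ref{lem 5.2}(iv).
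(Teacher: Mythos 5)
Your proposal is correct and takes essentially the same route as the paper's own proof: variation of constants over one period to get the difference equation for the segment function $t\mapsto u_t$, a resolvent manipulation turning it into $(\lambda-\mathcal M)R(\lambda,S)v=\cdots$, Lemma \ref{lem 5.2}(iv) to invert $\lambda-\mathcal M$ near points off $\sigma_\Gamma(M)$, and evaluation at $0$ to transfer the spectral bound from the $C_r$-valued segment function back to $u$. The only cosmetic differences are that the paper justifies $\sigma(\tilde g)\subset\sigma(f)$ by applying \cite[Lemma 5.3]{mingasste} to each $G_nf$ and passing to the uniform limit (your single bounded translation-commuting operator argument works equally well, and is what the paper itself does in Section 4), and that the paper carries out your appeal to ``the argument of Proposition \ref{pro 2.1}(iii)'' explicitly, via Neumann series for the evaluation map $p$.
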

\begin{proof}
By the formula (\ref{vcf})
\begin{eqnarray}\label{est-1}
u_t &=& U(t,t-1) u_{t-1} + \lim_{n\to \infty} \int^t_{t-1} U(t,s)\Gamma^n f(s)ds,
\end{eqnarray}
and the limit exists uniformly for all bounded $t$. First, as $f$ is uniformly continuous and bounded we can see that the function
\begin{equation}
A: \R \ni t \mapsto \lim_{n\to \infty} \int^t_{t-1} U(t,s)\Gamma^n f(s)ds \in C_r
\end{equation}
is also bounded and uniformly continuous. We can check easily the valadity of the identity
$$
\lambda R(\lambda ,S)S(-1) =R(\lambda ,S)+S(-1),
$$
for any $|\lambda | \not = 1$, where $S(t)$ stands for the translation group, and $S:=S(1)$. Note that the operator
${\cal M}$ of multiplication by $M(t)$ commutes with $S$ since the evolutionary process $(U(t,s))_{t\ge s}$ is 1-periodic. Below we will denote by ${\omega}$ the function $\R \ni t \mapsto u_t \in C_r$.
Then, from the identity (\ref{est-1}) one has (for all $\lambda \not= 0$ and $|\lambda |\not =1$)
\begin{eqnarray*}
\lambda R(\lambda,S){\omega} &=& \lambda R(\lambda ,S){\cal M} S(-1) {\omega} + \lambda R(\lambda ,S)A .
\end{eqnarray*}
Therefore,
\begin{eqnarray*}
\lambda R(\lambda,S){\omega} - {\cal M} R(\lambda,S){\omega}&=& {\cal M} S(-1){\omega}  + \lambda R(\lambda ,S)A,\\
(\lambda -{\cal M} )R(\lambda , S)\omega  &=& {\cal M} S(-1){\omega} +\lambda R(\lambda ,S)A .
\end{eqnarray*}
As shown in \cite[Lemma 5.3]{mingasste} for each fixed $n\in \N$
\begin{equation*}
\sigma (G_nf) \subset \sigma (f),
\end{equation*}
where 
$$
G_n f (t):= \int^t_{t-1} U(t,s)\Gamma^n f(s)ds.
$$
As the limit in the formula (\ref{vcf}) is uniform in $t$ we can see that $\sigma (A) \subset \sigma (f)$. Finally, if $\lambda_0 \not\in (\sigma_\Gamma (M) \cup \sigma (f))$, then near $\lambda_0$ the following holds
\begin{eqnarray}
R(\lambda , S)\omega = R(\lambda , {\cal M}) ( {\cal M} S(-1){\omega} +\lambda R(\lambda ,S)A ).
\end{eqnarray}
This shows that the complex function $R(\lambda , S)\omega$ is defined as an analytic function in a neighborhood of $\lambda_0$. 

\medskip
We will show further that this yields that the function  $R(\lambda , S)\omega (0)$ is also defined and analytic in a neighborhood of $\lambda_0$. In fact, before we proceed that we introduce $p:C_r \to \X$ defined as $p(w):= w(0)$. If so, with our above notations $p\circ \omega  = u$, and $p\circ S^k\omega =S^k u$ for all $k\in \N$. If $|\lambda|>1$ we have
\begin{eqnarray*}
p\circ R(\lambda , S) \omega  &=& \lambda^{-1} p\circ (I-S/\lambda )^{-1} \omega\\
&=& \lambda^{-1} p\circ  \left(\sum_{k=0}^\infty S^k/\lambda^k\right)\omega \\
&=&  \lambda^{-1} \left(\sum_{k=0}^\infty S^k/ \lambda^k \right) u \\
&=& R(\lambda ,S) u.
\end{eqnarray*}
Note that for simplicity we make an abuse of notation by denoting also by $S$ the translation in the function space $BUC(\R,\X)$ as well as in $BUC (\R , C_r)$. Similarly, for $\lambda\not= 0$ and $|\lambda | <1$ we can show that
$p\circ R(\lambda , S) \omega =R(\lambda ,S) u$. Hence, the transform $R(\lambda ,S)u$ of the function $u$ has 
$p\circ R(\lambda , S) \omega $
as an analytic extension in a neighborhood of $\lambda _0$. This shows that (\ref{spec est}) holds true, finishing the proof of the lemma.
\end{proof}

\medskip
Next, we recall some concepts and results in \cite{naiminshi}. Note that although the proofs could be found in \cite{naiminshi} we would like to give some new ones that seem to be simpler and would be more convenient to the reader.

Let us consider the subspace \ ${\cal N}\subset BUC({\R},{\X})$ (or $AP(\X)$, respectively)
consisting of all functions \ $v \in BUC({\R},{\X})$ (or $AP(\X)$, respectively)\
such that
\begin{equation}
\sigma (v) \subset S_1 \cup S_2 \ ,
\end{equation}
where $S_1, S_2$ are disjoint closed subsets of the unit
circle $\Gamma$. 
\begin{lemma}\label{lem decomposition}
Under the above notations and assumptions the function space ${\cal N}$
can be split into a direct sum ${\cal N}={\cal N}_1\oplus {\cal N}_2 $
such that $ v \in {\cal N}_i$ if and only if $\sigma (v)\subset S_i$ 
for $i=1,2$. Moreover, any  bounded linear operator in $BUC({\R},{\X})$ (or $AP(\X)$, respectively), that commutes with the translation $S$,
leaves invariant ${\mathcal N}$ as well as ${\mathcal N}_j$, $j=1,2$.
\end{lemma}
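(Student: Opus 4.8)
The plan is to realize the splitting as a Riesz spectral decomposition for the translation operator $S$ restricted to $\mathcal N$, and then to translate the operator-theoretic statement into the language of the circular spectrum. By Lemma \ref{lem spec of S on Lambda (X)} applied with $\Lambda = S_1\cup S_2$, the space $\mathcal N=\Lambda_\F(\X)$ is $S$-invariant and $\sigma(S|_{\mathcal N})=S_1\cup S_2$. Since $S_1$ and $S_2$ are disjoint closed, hence compact, subsets of $\Gamma$, they are separated by positive distance, so $\sigma(S|_{\mathcal N})$ decomposes into two spectral sets and I can choose a contour $\gamma$ in the resolvent set $\rho(S|_{\mathcal N})$ enclosing $S_1$ but not $S_2$ and form the Riesz projection
\[
P := \frac{1}{2\pi i}\oint_{\gamma} R(\lambda, S|_{\mathcal N})\, d\lambda .
\]
The standard Riesz--Dunford calculus then yields that $P$ is a bounded projection commuting with $S|_{\mathcal N}$, that $\mathcal N_1:=P\mathcal N$ and $\mathcal N_2:=(I-P)\mathcal N$ are closed $S$-invariant subspaces with $\mathcal N=\mathcal N_1\oplus \mathcal N_2$, and that $\sigma(S|_{\mathcal N_1})=S_1$, $\sigma(S|_{\mathcal N_2})=S_2$.

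Next I would identify these subspaces with the desired spectral classes, i.e. prove that $v\in\mathcal N_i$ if and only if $\sigma(v)\subset S_i$. For the forward direction, fix $v\in\mathcal N_1$. For $\lambda\in\C\backslash\Gamma$ one has $\lambda\in\rho(S)$, where $S$ denotes translation on the ambient space (recall $\sigma(S)=\Gamma$), so $R(\lambda,S)v$ is the unique solution $w$ of $(\lambda-S)w=v$; but $R(\lambda,S|_{\mathcal N_1})v\in\mathcal N_1$ solves the same equation, whence $R(\lambda,S)v=R(\lambda,S|_{\mathcal N_1})v$ on $\C\backslash\Gamma$. Since the right-hand side is analytic on the larger set $\C\backslash S_1$, the transform ${\cal S}v(\lambda)=R(\lambda,S)v$ extends analytically across every point of $\Gamma\backslash S_1$, which is exactly the statement $\sigma(v)\subset S_1$; the case $i=2$ is identical.

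For the converse I would combine the projection with parts iii) and iv) of Proposition \ref{pro 2.1}. Suppose $\sigma(v)\subset S_1$. Writing $v=Pv+(I-P)v$ with $Pv\in\mathcal N_1$ and $(I-P)v\in\mathcal N_2$, the operator $I-P$ commutes with $S$, so Proposition \ref{pro 2.1} iii) gives $\sigma((I-P)v)\subset\sigma(v)\subset S_1$; on the other hand $(I-P)v\in\mathcal N_2$, so by the forward direction $\sigma((I-P)v)\subset S_2$. Hence $\sigma((I-P)v)\subset S_1\cap S_2=\emptyset$, and Proposition \ref{pro 2.1} iv) forces $(I-P)v=0$, i.e. $v=Pv\in\mathcal N_1$. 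The invariance assertions are then immediate: if $B$ is bounded on the ambient space and commutes with $S$, then for $v\in\mathcal N$ (resp. $v\in\mathcal N_j$) Proposition \ref{pro 2.1} iii) gives $\sigma(Bv)\subset\sigma(v)\subset S_1\cup S_2$ (resp. $\subset S_j$), so $Bv\in\mathcal N$ (resp. $Bv\in\mathcal N_j$) by the characterization just proved.

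I expect the main obstacle to be the resolvent-comparison step that converts the operator spectrum $\sigma(S|_{\mathcal N_i})=S_i$ into the pointwise analytic-continuation statement defining $\sigma(v)\subset S_i$. This requires care in checking that the restricted resolvent $R(\lambda,S|_{\mathcal N_i})v$ genuinely furnishes the analytic extension of $R(\lambda,S)v$ across $\Gamma\backslash S_i$, and that uniqueness of solutions of $(\lambda-S)w=v$ on $\C\backslash\Gamma$ is what glues the two resolvents together. Everything else is routine functional calculus or a direct appeal to the listed properties of the circular spectrum.
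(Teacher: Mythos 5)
Your proposal is correct and follows essentially the same route as the paper: both construct the splitting via the Riesz spectral projection $P = \frac{1}{2\pi i}\int_\gamma R(\lambda, S|_{\mathcal N})\,d\lambda$ for a contour enclosing $S_1$ but not $S_2$, and then deduce $\sigma(v)\subset S_i$ for $v\in\mathcal N_i$ from $\sigma(S|_{\mathcal N_i})\subset S_i$ together with the properties of the circular spectrum. In fact your write-up is somewhat more complete than the paper's, which leaves the resolvent-comparison step and the converse implication ($\sigma(v)\subset S_i \Rightarrow v\in\mathcal N_i$, via $\sigma((I-P)v)=\emptyset$) implicit, and which obtains invariance by noting that an operator commuting with $S$ commutes with $P$ rather than by your (equally valid) appeal to the spectral inclusion of Proposition \ref{pro 2.1} iii).
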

\begin{proof}
By Lemma \ref{lem spec of S on Lambda (X)} and the Riezs spectral projection the space ${\cal N}$ could be split into the direct sum ${\cal N}={\cal N}_1\oplus {\cal N}_2 $ with ${\cal N}_1$ is the image of the projection
\begin{equation*}
P:= \frac{1}{ 2 {i\pi}}\int_\gamma R(\lambda , S|_{{\cal N}})d\lambda \ , 
\end{equation*}
where $\gamma $ is a positively oriented contour enclosing $S_1$ and disjoint from $S_2$. We have 
\begin{equation*}
\sigma (S|_{{\cal N}_1}) \subset S_1; \  \sigma (S|_{{\cal N}_2}) \subset S_2.
\end{equation*}
Therefore, if $v\in {\cal N}_i$, ($i=1,2$) by the definition of the circular spectrum it is easy to see that 
\begin{equation*}
\sigma (v) \subset \sigma (S|_{{\cal N}_i}) \subset S_i.
\end{equation*}
The second claim is obvious as any bounded linear operator in $BUC(\R,\X)$ (or $AP(\X)$, respectively) that commutes with $S$ must commute with $P$, so it leaves the spaces ${\cal N}, {\cal N}_1, {\cal N}_2$ invariant.
\end{proof}

\begin{theorem}\label{the main} (Decomposition Theorem) 
Let the following condition be satisfied
\begin{enumerate}
\item Eq.(\ref{FDE}) has a mild solution $u\in BUC(\R,\X)$ (or in $AP(\X)$, respectively)
\item 
\begin{equation}\label{resonnant}
\sigma_\Gamma (M)\ \backslash \sigma (f) \ \mbox{be closed}.
\end{equation}
\end{enumerate}
Then
there exists a mild solution $w$ of Eq.(\ref{FDE}) in $BUC(\R,\X)$ (or $AP(\X)$, respectively) such that
\begin{equation}
\sigma (w)\subset \sigma (f),
\end{equation}
that is unique if
\begin{equation}\label{uniqueness}
\sigma_\Gamma (M) \cap \sigma (f)=\emptyset .
\end{equation} 
\end{theorem}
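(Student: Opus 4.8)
The plan is to produce the required solution $w$ as the spectral component of the given bounded solution $u$ whose spectrum lies in $\sigma(f)$, and then to deduce uniqueness directly from the disjointness hypothesis (\ref{uniqueness}). The three ingredients are the spectral inclusion of Lemma \ref{lem spectral inclusion}, the spectral decomposition of Lemma \ref{lem decomposition}, and the characterization ${\cal L}x={\cal F}x+f$ of mild solutions from Theorem \ref{the mild solution}.

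First I would apply Lemma \ref{lem spectral inclusion} to the hypothesised bounded uniformly continuous (or almost periodic) mild solution $u$, obtaining $\sigma(u)\subset \sigma_\Gamma(M)\cup\sigma(f)$. I then set
$$ S_1 := \sigma_\Gamma(M)\backslash \sigma(f), \qquad S_2 := \sigma(f) . $$
Here $S_2$ is closed by Proposition \ref{pro 2.1}(i), $S_1$ is closed by the hypothesis (\ref{resonnant}), the two sets are disjoint by construction, and $S_1\cup S_2=\sigma_\Gamma(M)\cup\sigma(f)\supset\sigma(u)$. Thus $u$ lies in the space ${\cal N}$ of Lemma \ref{lem decomposition}, and the induced splitting ${\cal N}={\cal N}_1\oplus{\cal N}_2$ gives $u=u_1+u_2$ with $\sigma(u_j)\subset S_j$. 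I would take $w:=u_2$, so that $\sigma(w)\subset S_2=\sigma(f)$ is automatic; what remains is to check that $u_2$ is still a mild solution.

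For that step I would use Theorem \ref{the mild solution}, which says $u$ is a mild solution exactly when ${\cal L}u={\cal F}u+f$. Both ${\cal F}$ and the extended operator ${\cal L}$ commute with the period–translation $S=S(1)$: for ${\cal F}$ this is immediate from the $1$-periodicity of $F$, and for ${\cal L}$ it follows because the evolution semigroup $T^hg(t)=T(h)g(t-h)$ commutes with $S$, hence so does its generator ${\cal G}=-{\cal L}$. Writing $P$ for the Riesz projection of Lemma \ref{lem decomposition} onto ${\cal N}_1$ and $Q:=I-P$, the bounded operator ${\cal F}$ commutes with $P$ and $Q$ by Lemma \ref{lem decomposition}, while $f\in{\cal N}_2$ (since $\sigma(f)=S_2$) gives $Qf=f$. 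Applying $Q$ to ${\cal L}u={\cal F}u+f$ and commuting $Q$ past ${\cal L}$ yields ${\cal L}u_2={\cal F}u_2+f$, so $u_2$ is a mild solution with $\sigma(u_2)\subset\sigma(f)$, remaining in $AP(\X)$ in the almost periodic case since the projections preserve that space. I expect the genuine obstacle to be precisely this commutation for the \emph{unbounded} ${\cal L}$: one must verify that $P$ maps $D({\cal L})$ into itself and satisfies ${\cal L}P=P{\cal L}$ there. The clean route is to observe that a resolvent $R(\mu,{\cal G})$ commutes with $S$, hence with every $R(\lambda,S)$ and therefore with the contour integral defining $P$; writing an arbitrary $g\in D({\cal L})$ as $g=R(\mu,{\cal G})\psi$ then shows $Pg\in D({\cal L})$ and ${\cal G}Pg=P{\cal G}g$.

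Finally, for uniqueness under (\ref{uniqueness}) I would take two solutions $w,w'$ with spectra contained in $\sigma(f)$ and set $v:=w-w'$. Then $v$ is a bounded mild solution of the homogeneous equation (the case $f=0$), so Lemma \ref{lem spectral inclusion} gives $\sigma(v)\subset\sigma_\Gamma(M)$; at the same time $\sigma(v)\subset\sigma(f)$, because the functions with spectrum in $\sigma(f)$ form a closed subspace by Corollary \ref{cor 2}. Hence $\sigma(v)\subset\sigma_\Gamma(M)\cap\sigma(f)=\emptyset$, and Proposition \ref{pro 2.1}(iv) forces $v=0$, i.e. $w=w'$.
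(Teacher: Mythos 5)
Your proposal is correct and follows essentially the same route as the paper: the spectral inclusion of Lemma \ref{lem spectral inclusion}, the Riesz decomposition of Lemma \ref{lem decomposition}, projecting the identity ${\cal L}u={\cal F}u+f$ onto the spectral component with spectrum in $\sigma(f)$ (your $Q$ is the paper's $P$, up to relabeling of $S_1,S_2$), and uniqueness via $\sigma(w-v)\subset\sigma_\Gamma(M)\cap\sigma(f)=\emptyset$ together with Proposition \ref{pro 2.1}(iv). The only (harmless) variation is how the projection is commuted past the unbounded ${\cal L}$: you route this through the resolvent $R(\mu,{\cal G})$, while the paper commutes $P$ with $T^h$ and passes it through the difference quotient defining ${\cal G}=-{\cal L}$.
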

\begin{proof}
By Lemma \ref{lem spectral inclusion}
\begin{equation}
\sigma (u) \subset \sigma_\Gamma(M)\cup \sigma (f).
\end{equation}
Let us denote by $\Lambda$ the set $\sigma _\Gamma (M) \cup \sigma (f)$, $S_1$ the set
$\sigma (f)$ and $S_2$ the set $\sigma_\Gamma (M)\ \backslash \
\sigma (f)$, respectively. 
Thus, these two sets are closed and disjoint subsets of the unit circle $\Gamma$, so by Lemma \ref{lem decomposition} there exists the projection $P$ from 
${\mathcal N}$ onto ${\mathcal N}_1$ which is commutative with ${\mathcal F}$ and $T^h$. Since $u$ is a mild solution of (\ref{FDE}) if and only if $u\in D({\cal L}) $ and 
\begin{equation}
{\cal L}u = {\cal F}u+f,
\end{equation}
by Lemma \ref{C2S1 lemma 2.1.1}
we have
\begin{eqnarray*}
 {\cal L}u &=& -{\cal G}u,
\end{eqnarray*}
so this yields
\begin{eqnarray*}
P{\cal L}u &=&-P{\cal G}u\nonumber\\
&=& -P\lim_{h\to 0^+}{\frac{T^hu-u}{h}}\nonumber\\
&=&-\lim_{h\to 0^+}P{\frac{T^hu-u}{h}}\nonumber\\
&=&-\lim_{h\to 0^+}{\frac{T^hPu-Pu}{h}}\nonumber\\
&=&  -{\cal G}Pu\nonumber\\
&=&
{\mathcal L}Pu.\label{**}
\end{eqnarray*}
Since $Pf=f$ and $P$ commutes with ${\mathcal F}$,
\begin{eqnarray*}
P{\cal L}u &=& P{\cal F}u+Pf\nonumber\\
{\cal L}Pu &=& {\cal F}Pu + f.
\end{eqnarray*}
By Theorem \ref{the mild solution} this shows $w:=Pu\in {\cal N}_1$ is a mild solution of Eq. (\ref{FDE}) that has circular spectrum $\sigma (Pu) \subset S_1 =\sigma (f)$.
Next, if condition (\ref{uniqueness}) holds, then
the uniqueness of such a solution in ${\cal N}_1$ is clear. In fact, suppose that there is another mild solution $v\in BUC({\R},{\X})$ (or in $AP(\X)$, respectively)
to Eq.(\ref{FDE}) such that 
$\sigma (v) \subset \sigma (f)$, then 
$w-v$ is a mild solution of the homogeneous equation corresponding to Eq.(\ref{FDE}), so
$\sigma (w-v)\subset \sigma_\Gamma (M)$. As $\sigma (v)\subset \sigma (f)$, by (\ref{uniqueness}) this yields that $\sigma (w-v)=\emptyset$, and because of this $w-v=0$. This completes the proof of the theorem.
\end{proof}

Recall that the set of all real numerical sequences that are convergent to zero is a Banach space with sup-norm that is denoted by $c_0$.
As a consequence of the above theorem we obtain the following main result of the paper. 

\begin{theorem}\label{cor main1}
Assume that Eq. (\ref{FDE}) has a bounded uniformly continuous mild solution $u$, and 
Condition (\ref{resonnant}) of Theorem \ref{the main} is satisfied. Moreover, let the space ${\X}$
not contain $c_0$ and $\sigma (f)$ be countable. Then there exists an almost periodic
mild solution $w$ to Eq.(\ref{FDE}) such that $\sigma (w) \subset \sigma (f)$ . Furthermore, if 
(\ref{uniqueness}) holds, then such a solution $w$ is unique.
\end{theorem}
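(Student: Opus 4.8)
The plan is to reduce everything to the Decomposition Theorem (Theorem \ref{the main}) in its $BUC(\R,\X)$ form and then to bootstrap the resulting bounded uniformly continuous solution up to an almost periodic one, using the spectral characterization of almost periodicity recalled in Section \ref{section 2}. The point to keep in mind is that we cannot invoke the $AP(\X)$ version of Theorem \ref{the main} directly: the hypothesis only furnishes a mild solution $u\in BUC(\R,\X)$, not one already known to be almost periodic, so the almost periodicity must be recovered \emph{a posteriori} from the spectral information.

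First I would apply Theorem \ref{the main} in the $BUC(\R,\X)$ setting. Both of its hypotheses hold verbatim: Eq.~(\ref{FDE}) has a mild solution $u\in BUC(\R,\X)$ by assumption, and Condition (\ref{resonnant}) is assumed. Hence there exists a mild solution $w\in BUC(\R,\X)$ of Eq.~(\ref{FDE}) satisfying $\sigma(w)\subset\sigma(f)$.

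Next comes the upgrade, which is the heart of the argument. Since $\sigma(w)\subset\sigma(f)$ and $\sigma(f)$ is countable by hypothesis, $\sigma(w)$ is a countable subset of $\Gamma$. As $w\in BUC(\R,\X)$, Corollary \ref{lem spec and WSM the} gives $\sigma(w)=\overline{e^{i\,sp(w)}}$, so countability of the circular spectrum forces the Beurling spectrum $sp(w)$ to be countable as well, exactly as in the remark following that corollary. Invoking now the assumption that $\X$ contains no subspace isomorphic to $c_0$, the cited Loomis-type criterion yields that a function in $BUC(\R,\X)$ with countable Beurling spectrum is almost periodic. Therefore $w\in AP(\X)$, which is the existence assertion, with $\sigma(w)\subset\sigma(f)$ still intact.

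For uniqueness under Condition (\ref{uniqueness}), it suffices to note that any competing almost periodic solution $v$ with $\sigma(v)\subset\sigma(f)$ lies in $BUC(\R,\X)$, so the uniqueness clause of Theorem \ref{the main} applies directly; concretely, $w-v$ is a mild solution of the associated homogeneous equation, whence $\sigma(w-v)\subset\sigma_\Gamma(M)$, and together with $\sigma(w-v)\subset\sigma(f)$ and (\ref{uniqueness}) this gives $\sigma(w-v)=\emptyset$, so $w=v$ by Proposition \ref{pro 2.1} (iv). The only genuine obstacle is the passage from boundedness to almost periodicity: this is not an analytic estimate but the spectral-synthesis fact that countability of the spectrum, in the absence of a copy of $c_0$, already forces almost periodicity; everything else is a direct transcription of the already-established Decomposition Theorem.
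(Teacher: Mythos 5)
Your proof is correct and follows essentially the same route as the paper: the paper's own proof simply invokes Theorem \ref{the main} together with the Loomis-type theorem of Levitan--Zhikov (a $BUC(\R,\X)$ function with countable spectrum, in a space not containing $c_0$, is almost periodic), which is exactly the reduction-plus-upgrade you carry out in detail via Corollary \ref{lem spec and WSM the} and the remark following it. Your spelled-out uniqueness argument likewise just reproduces the uniqueness clause of Theorem \ref{the main}, so there is no divergence from the paper's reasoning.
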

\begin{proof}
The proof is obvious in view of \cite[Theorem 4, p.92]{levzhi} and Theorem \ref{the main}.
\end{proof}

Below we will relax the condition on the existence of a bounded uniformly continuous mild solutions when a condition (\ref{uniqueness}) is satisfied.
\begin{theorem}\label{the main 2}
Under the above notation assume that
\begin{equation}\label{uniqueness2}
\sigma_\Gamma (M) \cap \sigma (f)=\emptyset 
\end{equation} 
holds.
Then there exists a unique almost periodic mild solution $w$ to Eq. (\ref{FDE}) such that $\sigma (w) \subset \sigma (f)$.
\end{theorem}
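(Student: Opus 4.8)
The plan is to remove the standing hypothesis of Theorems \ref{the main} and \ref{cor main1}---existence of a bounded solution---by \emph{constructing} an almost periodic mild solution directly under (\ref{uniqueness2}), and to do so while never leaving almost periodic spaces, which is what makes the $c_0$ and countability assumptions of Theorem \ref{cor main1} unnecessary: the image of an $AP(C_r)$-valued function under a bounded linear map automatically lies in $AP(\X)$. By the variation-of-constants formula (\ref{vcf}) of Theorem \ref{the vcf}, writing $\omega(t):=u_t$ and $A(t):=\lim_{n\to\infty}\int_{t-1}^{t}U(t,s)\Gamma^n f(s)\,ds$, a bounded mild solution of (\ref{FDE}) yields a solution of the sampled functional equation
\begin{equation}\label{sampled}
\omega=\mathcal{M}S(-1)\omega+A,
\end{equation}
where $\mathcal{M}$ is multiplication by the monodromy $M(t)$ and $S=S(1)$; moreover $A\in AP(C_r)$ with $\sigma(A)\subset\sigma(f)$, exactly as shown inside the proof of Lemma \ref{lem spectral inclusion}. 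First I would introduce $\mathcal{D}_f:=\{\omega\in AP(C_r):\sigma(\omega)\subset\sigma(f)\}$, a closed, $S$-invariant subspace containing $A$ by Corollary \ref{cor 2} and Lemma \ref{lem spec of S on Lambda (X)}, and reduce the existence part to solving (\ref{sampled}) in $\mathcal{D}_f$.

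The heart of the argument is the invertibility of $I-\mathcal{M}S(-1)$ on $\mathcal{D}_f$. Since translation is a group, $S(-1)=S^{-1}$ and $I-\mathcal{M}S(-1)=(S-\mathcal{M})S^{-1}$, so it suffices to invert $S-\mathcal{M}$. The operators $S$ and $\mathcal{M}$ commute because the process is $1$-periodic, so the functional calculus for a commuting pair gives, on $\mathcal{D}_f$,
\begin{equation}
\sigma(S-\mathcal{M})\subset\sigma(S|_{\mathcal{D}_f})-\sigma(\mathcal{M}|_{\mathcal{D}_f}).
\end{equation}
Here $\sigma(S|_{\mathcal{D}_f})=\sigma(f)$ by Lemma \ref{lem spec of S on Lambda (X)}. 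For $\mathcal{M}$, observe that whenever $\lambda\in\rho(M)$ the operator of multiplication by the strongly continuous, periodic, hence uniformly bounded family $R(\lambda,M(t))$ (Lemma \ref{lem 5.2}(iii)) is a bounded inverse of $\lambda-\mathcal{M}$ that commutes with $S$ and therefore preserves $\mathcal{D}_f$; consequently $\sigma(\mathcal{M}|_{\mathcal{D}_f})\subset\sigma(M)$, so that $\sigma(\mathcal{M}|_{\mathcal{D}_f})\cap\Gamma\subset\sigma_\Gamma(M)$. As $\sigma(f)\subset\Gamma$ avoids $0$, hypothesis (\ref{uniqueness2}) yields $\sigma(f)\cap\sigma(\mathcal{M}|_{\mathcal{D}_f})=\emptyset$, whence $0\notin\sigma(S-\mathcal{M})$. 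Thus $S-\mathcal{M}$ is invertible and (\ref{sampled}) has the solution $\omega=S(S-\mathcal{M})^{-1}A\in\mathcal{D}_f$.

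Setting $w:=p\circ\omega$ with $p(\phi):=\phi(0)$, the intertwining $p\circ S=S\circ p$ used at the end of the proof of Lemma \ref{lem spectral inclusion} gives $\sigma(w)\subset\sigma(\omega)\subset\sigma(f)$, while $w\in AP(\X)$ because $p$ is bounded and linear; one then checks that $w$ is a genuine mild solution of (\ref{FDE}). Uniqueness is the verbatim argument from the last paragraph of the proof of Theorem \ref{the main}: if $w,v$ are two such solutions, their difference is a bounded mild solution of the homogeneous equation, so $\sigma(w-v)\subset\sigma_\Gamma(M)$ by Lemma \ref{lem spectral inclusion} and $\sigma(w-v)\subset\sigma(f)$ by assumption; under (\ref{uniqueness2}) this forces $\sigma(w-v)=\emptyset$, hence $w=v$ by Proposition \ref{pro 2.1}(iv).

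I expect two points to demand the most care. The first is the invertibility of $S-\mathcal{M}$: the subspace $\mathcal{D}_f$ need not be complemented, since $\sigma(f)$ need not be spectrally isolated in $\Gamma$, so controlling $\sigma(\mathcal{M}|_{\mathcal{D}_f})$ and legitimately invoking the spectral-difference estimate for the commuting pair is the real obstacle; it is overcome above only because $\mathcal{M}$ and all of its resolvents are multiplication operators that preserve $\mathcal{D}_f$. The second is the consistency check that the abstract solution $\omega$ of (\ref{sampled}) is in fact the segment process $t\mapsto w_t$ and that $w$ satisfies (\ref{mild solution2})---equivalently $\mathcal{L}w=\mathcal{F}w+f$ via Theorem \ref{the mild solution}; this is essentially bookkeeping inside the variation-of-constants construction of \cite{murnaimin2}, but it must be verified rather than taken for granted.
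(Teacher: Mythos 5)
Your reduction to the sampled equation $\omega=\mathcal{M}S(-1)\omega+A$ and your solution of it inside $\mathcal{D}_f$ are sound, and in fact they give a self-contained replacement for the step where the paper simply cites \cite[Theorem 4.7]{mingasste}: the operators $S$ and $\mathcal{M}$ restricted to $\mathcal{D}_f$ are commuting bounded operators, the resolvent of $\mathcal{M}$ is again a multiplication operator commuting with $S$ (so $\sigma(\mathcal{M}|_{\mathcal{D}_f})\subset\sigma(M)$), and the Gelfand-theoretic inclusion $\sigma(S-\mathcal{M})\subset\sigma(S|_{\mathcal{D}_f})-\sigma(\mathcal{M}|_{\mathcal{D}_f})$ together with (\ref{uniqueness2}) makes $S-\mathcal{M}$ invertible on $\mathcal{D}_f$. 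Up to that point your argument parallels the paper's and is correct. The uniqueness paragraph is also fine and is the same as the paper's.

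The genuine gap is the step you dismiss as ``essentially bookkeeping'': passing from the abstract solution $\omega\in\mathcal{D}_f$ of the sampled equation to the claim that $w:=p\circ\omega$ is a mild solution of (\ref{FDE}) with $\omega(t)=w_t$. The functional equation $\omega(t)=M(t)\omega(t-1)+A(t)$ only couples the values of $\omega$ at $t$ and $t-1$; it decouples completely across the phases $t \bmod 1$, and it does not encode the segment-consistency condition $\omega(t)(\theta)=\omega(t+\theta)(0)$. A solution of the sampled equation need not be the segment process of any $\X$-valued function, so nothing so far shows that $p\circ\omega$ solves (\ref{mild solution2}). The only visible way to close this is the route the paper actually takes, and it is substantive, not bookkeeping: construct a genuine mild solution $u$ by solving the initial value problem on each interval $[n,n+1]$ with initial segment $\omega(n)$, use the sampled equation at integers to check the pieces match ($u_{n+1}=\omega(n+1)$), then prove that $u$ is almost periodic by the Fink-type $\epsilon$-period plus Gronwall argument (this occupies most of the paper's proof), and finally either apply the Decomposition Theorem \ref{the main} to $u$ or identify the segment process $t\mapsto u_t$ with $\omega$ through uniqueness in $\mathcal{D}_f$ --- which itself requires the almost periodicity of $u$ first, since agreement of $\omega$ and $t\mapsto u_t$ at integers alone says nothing at non-integer times. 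Without this construction and the almost-periodicity argument, your proof produces a solution of the wrong equation: a fixed point of the period map acting on functions, not a mild solution of (\ref{FDE}).
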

\begin{proof}
Consider the difference equation
\begin{equation}\label{difference eq}
w(t)= M(t) w(t-1)+g(t), \ t\in \R ,
\end{equation}
where for all $t\in \R$
\begin{eqnarray*}
M(t) &:=& U(t,t-1),\\
g(t) &:=&\lim_{n\to \infty} \int^t_{t-1} U(t,s)\Gamma^n f(s)ds.
\end{eqnarray*}
First, we note that $g$ is almost periodic function taking values in $C_r$. In fact, for each $n\in \N$ the function
$$
F_n: \R \ni t \mapsto \Gamma^n f(t) \in C_r
$$
is an almost periodic function with $\sigma (F_n) \subset \sigma (f)$. Next, by \cite[Lemma 5.3]{mingasste}
the function 
$$
F:\R\ni t \mapsto \int^t_{t-1} U(t,\xi ) F_n(\xi )d\xi
$$
is also almost periodic, and $\sigma (F) \subset \sigma (F_n) \subset \sigma (f)$. Therefore, $g$ is almost periodic and $\sigma (g) \subset \sigma (f)$.

\medskip
By \cite[Theorem 4.7]{mingasste} if (\ref{uniqueness2}) holds there exists a unique almost periodic solution $w$ to (\ref{difference eq}) such that $\sigma (w) \subset \sigma (f)$. Our next goal is to prove that there exists a mild solution $u$ of Eq. (\ref{FDE}) such that $u_n=w(n)$ for all $n\in \Z$. For each fixed $n\in \Z$ consider the unique mild solution to Eq. (\ref{FDE}) on the interval $[n,n+1]$ that is generated by the equation
\begin{eqnarray*}
u(t) &=& T(t-n)[w(n)](0) + \int^t_{n} T(t-\eta )[F(\eta )u_\eta +f(\eta )] d\eta , \ t\in [n,n+1],\\
u_n &=& w(n).
\end{eqnarray*}
This solution exists uniquely on the interval $[n,n+1]$ for each $n\in \Z$. By the Variation-of-Constants formula
(\ref{vcf})
\begin{equation}\label{3.29}
u_t=U(t,n)w(n)
+\lim_{m\to \infty}\int_{n}^tU(t,s)\Gamma^mf(s)ds, \qquad t\geq n .
\end{equation}
Therefore, if $t=n+1$ we have that $u_{n+1}= w(n+1)$. This means that we obtain a mild solution $u$ of Eq. (\ref{FDE}) that is defined on each interval $[n,n+1]$ by (\ref{3.29}) so that it coincides with $w$ at each integer $n$. Therefore, the sequence $w(n) = u_n$ is almost periodic. This yields that $u(n) = u_n(0)$ is an almost periodic sequence. We are going to prove that $u$ is almost periodic function. The proof will follow a well known idea in \cite{fin} that are used in \cite{bathutrab,furnaimin} as well. For the completeness we present it below.

As $w(\cdot )$ and $f$ are almost periodic, so is the function $g:{\bf R}\ni t \mapsto (w(t),f(t))\in C\times {\bf X}$
(see \cite[p.6]{levzhi}). 
As is known, the sequence
$\{g(n)\}=\{(w(n),f(n))\}$ is almost periodic.
Hence, for every positive $\epsilon$ the following set is relatively dense 
(see \cite[p. 163-164]{fin})
\begin{equation*}
T:={\bf Z} \cap T(g, \epsilon ),
\end{equation*}
where $T(g,\epsilon ):= \{ \tau \in {\bf R}: 
\sup_{t\in {\bf R}}\| g(t+\tau )-g(t)\| < \epsilon \}$, 
i.e., the set of $\epsilon$
periods of $g$. Hence, for every $m \in T$ we have
\begin{eqnarray*}
\|f(t+m)-f(t)\| &<& \epsilon , \forall t \in {\bf R}, \\ 
\| w(n+m)-w(n)\| &<& \epsilon , \forall n \in {\bf Z}.  
\end{eqnarray*}
Since $u$ is a solution to Eq.(\ref{mild solution}), for $0\le s < 1$ and all $n \in {\bf N}$, we have
\begin{eqnarray*}
&&\| u(n+m+s)-u(n+s)\| \le\| T(s)\| \cdot \| w(n+m)-w(n)\| \\
&&\hspace{2cm} +\int^s_0\| T(s-\xi )\| \big[ \sup_{t}\| F(t) \| \cdot \| u_{n+m+\xi }-u_{n+\xi }\| \\
&&\hspace{2cm} + \| f(n+m+\xi )-f(n+\xi )\| \big]d\xi \\
&&\ \ \le Ne^\omega \| w(n+m)-w(n)\| +Ne^\omega  \int^s_0 \big[ \| F \| \\
&&\ \hspace{1cm} \times \| u_{n+m+\xi }-u_{n+\xi }\| + \| f(n+m+\xi )-f(n+\xi )\| \big] d\xi .
\end{eqnarray*}
Hence
\begin{eqnarray*}
&&\| x_{n+m+s}-x_{n+s}\|
\le  Ne^\omega \| w(n+m)-w(n)\| \\
&&\ \ \ \ +Ne^\omega  \int^s_0 \big[ \| F \| \cdot \| x_{n+m+\xi }-x_{n+\xi }\| + \| f(n+m+\xi )-f(n+\xi )\| \big]d\xi .
\end{eqnarray*}
Using the Gronwall inequality we can show that
\begin{equation}
\| u_{n+m+s}-u_{n+s}\| \le 
\epsilon M,
\end{equation}
where $M$ is a constant which depends only on $\sup_{t}\| F(t)\| , N, \omega $. This shows that 
$m$ is a $\epsilon M$-period of 
the function $x(\cdot )$. 
Finally, since $T$ is relatively dense for every $\epsilon$,
we see that $x(\cdot )$ is an almost periodic mild solution of Eq.(\ref{FDE}).
Once the almost periodicity of $u$ was proved we are are able to apply the Decomposition Theorem \ref{the main} to finish the proof of this theorem.
\end{proof}

\section{Discussion: Variation-of-constant formula in the phase spec and further extension}
Our results in the previous section could be extended to a bit more general case of periodic equations. Namely, let us consider equations of the form
\begin{equation}\label{per fde}
\frac{du}{dt} = A(t)u+ F(t)u_t + f(t), \ t\in \R ,
\end{equation}
where the family of (possibly unbounded) operators $A(t)$ generates a $1$-periodic evolutionary process and $F(t )$ is a 1-periodic family of bounded operators as in (\ref{FDE}), and $f$ is an almost periodic function taking values in $\X$.

\medskip
The presentation of our proofs of the results in the previous section relies on the variation-of-constants formula (\ref{vcf}) in the phase space $C_r$ that allows us to easily outline the ideas. In turn, we have made use of the formula available in the case when $A(t)$ is independent of $t$ although our results could be true even if $A(t)$ may depend on $t$ periodically with the same period as that of $F(t)$.

\medskip
As shown in \cite[Lemma 4.1]{furnaimin}, there is a way to get around with the variation-of-constant formula (\ref{vcf}). Below is a version of Lemma 4.1 from \cite{furnaimin} that could be used to extend our results in the previous section to the general case of equations (\ref{per fde}). We consider the following Cauchy Problem
for each given $t\in \R$\begin{eqnarray*}
y(\xi ) &=& \int^\xi _{t-1} V(\xi ,\eta )[F(\eta ) y_\eta +f(\eta )]d\eta , \ \xi \ge t-1,\\
y_{t-1}&=& 0 \in C_r ,
\end{eqnarray*}
where $(V(t,s))_{t\ge s}$ is a 1-periodic evolutionary process generated by the homogeneous equation
$$
\frac{du}{dt}= A(t)u,
$$
Let us define $v:\R \ni t \mapsto y_t \in C_r$. We define the operator $L: BUC(\R,\X) \ni f \mapsto v$.
\begin{lemma}
The operator $L$ is well defined operator in $BUC(\R,\X)$ that is linear and continuous and commutes with the translation $S$.
\end{lemma}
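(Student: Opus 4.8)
The plan is to verify in turn that $L$ is well defined with $v=Lf$ bounded and uniformly continuous, linear, bounded (hence continuous), and commuting with $S$. Throughout I would use that $(V(t,s))_{t\ge s}$ is a strongly continuous $1$-periodic evolutionary process, so the growth bound $\|V(\xi,\eta)\|\le Ne^{\omega(\xi-\eta)}$ together with $\sup_t\|F(t)\|$ are independent of $t$, and I would always work on intervals of the fixed length $1$. For each fixed $t$, I extend $y$ by $0$ on $[t-1-r,t-1]$ so that $y_{t-1}=0$ makes sense, and view the right-hand side as a Volterra operator
\[
(\Phi y)(\xi)=\int_{t-1}^{\xi} V(\xi,\eta)\bigl[F(\eta)y_\eta+f(\eta)\bigr]\,d\eta
\]
on $C([t-1,t],\X)$. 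On a subinterval of length $\delta$ with $Ne^{|\omega|}(\sup_t\|F(t)\|)\,\delta<1$ this map is a contraction; piecing together finitely many such subintervals covers $[t-1,t]$ and produces a unique continuous solution, which I denote $y^{(t)}[f]$, hence a well-defined segment $v(t):=y^{(t)}[f]_t\in C_r$. A Gronwall estimate applied to the integral equation gives $\|y^{(t)}[f](\xi)\|\le C\|f\|_\infty$ for $\xi\in[t-1,t]$, with $C=C(N,\omega,\sup_t\|F(t)\|)$ independent of $t$; this yields $\|v(t)\|_{C_r}\le C\|f\|_\infty$, proving boundedness of $v$ and the operator bound $\|Lf\|_\infty\le C\|f\|_\infty$. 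Linearity of $L$ is then immediate from uniqueness, since $\alpha\,y^{(t)}[f_1]+\beta\,y^{(t)}[f_2]$ solves the affine-linear problem with data $\alpha f_1+\beta f_2$ and zero history.

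For the commutation $LS=SL$ I would combine $1$-periodicity with this uniqueness. Let $z=y^{(t+1)}[f]$ be the solution based at $t$ with data $f$, and set $w(\xi):=z(\xi+1)$ for $\xi\ge t-1$. Substituting $\eta=\sigma+1$ in the integral defining $z$ and using $V(\xi+1,\sigma+1)=V(\xi,\sigma)$, $F(\sigma+1)=F(\sigma)$, and $z_{\sigma+1}=w_\sigma$, one finds that $w$ satisfies exactly the Cauchy problem based at $t-1$ with data $Sf$ and $w_{t-1}=z_t=0$. Uniqueness forces $w=y^{(t)}[Sf]$, and evaluating segments gives
\[
(L(Sf))(t)=w_t=z_{t+1}=(Lf)(t+1)=(S\,Lf)(t),
\]
so $L$ commutes with $S$.

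The step I expect to be the main obstacle is the uniform continuity of $v$, which is the substantive content of ``well defined'' and is more delicate than boundedness because the base point $t-1$ of the Cauchy problem moves with $t$. I would estimate $\|v(t+h)-v(t)\|_{C_r}$ by a Gronwall argument comparing the two solutions with base points $t-1+h$ and $t-1$; the discrepancy is controlled by the modulus of continuity of $f$, by the strong continuity of $(V(t,s))_{t\ge s}$, and by the continuity in $t$ of $F(t)$, each bounded uniformly in $t$ thanks to $1$-periodicity. Finally, since $v$ is $C_r$-valued I would note, exactly as for the $C_r$-valued function $t\mapsto u_t$ in the proof of Lemma~\ref{lem spectral inclusion}, that $L$ is naturally an operator into $BUC(\R,C_r)$, with evaluation at $0$ recovering an $\X$-valued function when required.
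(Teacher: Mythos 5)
The paper gives no actual proof here --- it simply defers to \cite[Lemma 4.1]{furnaimin} --- so you are supplying details the authors omitted. Your contraction argument for existence and uniqueness of $y^{(t)}[f]$, the Gronwall bound $\|Lf\|_\infty\le C\|f\|_\infty$, linearity via uniqueness, and in particular the commutation $LS=SL$ (shift by $1$, use $V(\xi+1,\sigma+1)=V(\xi,\sigma)$ and $F(\sigma+1)=F(\sigma)$, then invoke uniqueness) are all correct, and your closing remark that $L$ really maps into $BUC(\R,C_r)$ is a fair reading of the paper's loose phrase ``operator in $BUC(\R,\X)$''.

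The genuine gap is exactly at the step you flag as the main obstacle: uniform continuity of $v=Lf$ for general $f\in BUC(\R,\X)$. After shifting the solution based at $t-1+h$ back by $h$, your Gronwall comparison leaves the discrepancy terms
\begin{equation*}
\int_{t-1}^{\xi}\bigl[V(\xi+h,\sigma+h)-V(\xi,\sigma)\bigr]f(\sigma)\,d\sigma ,\qquad
\int_{t-1}^{\xi}\bigl[V(\xi+h,\sigma+h)F(\sigma+h)-V(\xi,\sigma)F(\sigma)\bigr]y^{(t)}_\sigma\,d\sigma ,
\end{equation*}
and your proposed control of these by ``strong continuity of $(V(t,s))_{t\ge s}$ \ldots uniformly in $t$ thanks to $1$-periodicity'' does not work: periodicity compactifies the operator parameters $(\xi,\sigma)$ modulo $1$, but not the vectors $f(\sigma)$ to which the operators are applied, and strong continuity yields uniform smallness only on precompact sets of vectors. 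A function in $BUC(\R,\X)$ with $\X$ infinite dimensional need not have precompact range. Indeed, writing $V(\xi+h,\sigma+h)-V(\xi,\sigma)=V(\xi+h,\sigma+h)\bigl[I-V(\sigma+h,\sigma)\bigr]+\bigl[V(\xi+h,\xi)-I\bigr]V(\xi,\sigma)$, what your route requires is essentially $\sup_\sigma\|[V(\sigma+h,\sigma)-I]f(\sigma)\|\to 0$, i.e.\ strong continuity at $f$ of the evolution semigroup associated with $(V(t,s))_{t\ge s}$ --- which in general fails on all of $BUC(\R,\X)$ for a genuinely nonautonomous periodic process; this failure is precisely why the paper works with the subspace ${\mathcal S}$ in Lemma~\ref{C2S1 lemma 2.1.1}. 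Two repairs are available: (i) restrict to $f\in AP(\X)$, where the range of $f$ and the translation orbit $\{S^nf\}$ are precompact (Bochner's criterion) and the Lipschitz dependence of $y^{(t)}[f]$ on $f$ then makes all relevant vector sets precompact, so strong continuity on compacta closes the estimate --- this is all the paper's applications need, since $\sigma(Lf)\subset\sigma(f)$ is only used for almost periodic $f$; or (ii) observe that in the setting actually treated in \cite[Lemma 4.1]{furnaimin} the operator $A$ is autonomous, so $V(\xi,\sigma)=T(\xi-\sigma)$ is invariant under simultaneous shifts and the first discrepancy term vanishes identically. Note finally that the ``continuity in $t$ of $F(t)$'' you invoke is nowhere among the paper's hypotheses (only periodicity, bounded variation, and $\sup_t\|F(t)\|<\infty$ are assumed), so it must be stated as an additional assumption.
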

\begin{proof}
Since the proof could be easily adapted from that of \cite[Lemma 4.1]{furnaimin} details will be omitted.
\end{proof}
From the definition of the function $v$ we can verify that if $u$ is a mild solution of (\ref{FDE}) on the real line, then
\begin{equation*}
u_t=U(t,t-1)u_{t-1} + v (t), \ t\in\R .
\end{equation*}
Therefore, the circular spectrum of $u$ could be estimated as below
\begin{lemma}
\begin{equation*}
\sigma (u) \subset \sigma_\Gamma (M) \cup \sigma (f) .
\end{equation*}
\end{lemma}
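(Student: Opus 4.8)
The plan is to mirror the proof of Lemma \ref{lem spectral inclusion} line for line, replacing the explicit variation-of-constants term by the function $v=Lf$ furnished by the preceding lemma. The starting point is the relation $u_t=M(t)u_{t-1}+v(t)$ just derived, with $M(t)=U(t,t-1)$. Writing $\omega$ for the $C_r$-valued function $t\mapsto u_t$ and $\mathcal M$ for the operator of multiplication by $M(t)$, this reads $\omega=\mathcal M S(-1)\omega+v$ in $BUC(\R,C_r)$, which is exactly the form of identity (\ref{est-1}) in the earlier proof, now with $v$ in place of $A$.

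First I would record the two spectral inputs that drive the argument. Since $L$ is bounded and commutes with the translation $S$ (the preceding lemma), part (iii) of Proposition \ref{pro 2.1} gives $\sigma(v)=\sigma(Lf)\subset\sigma(f)$; concretely, $R(\lambda,S)v=L\,R(\lambda,S)f$ inherits an analytic extension wherever $R(\lambda,S)f$ has one. Second, by part (iv) of Lemma \ref{lem 5.2} we have $\sigma(\mathcal M)\setminus\{0\}\subset\sigma(M)\setminus\{0\}$, so every $\lambda_0\in\Gamma\setminus\sigma_\Gamma(M)$ lies in $\rho(\mathcal M)$ and $R(\lambda,\mathcal M)$ is analytic near such a $\lambda_0$.

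Next I would apply $\lambda R(\lambda,S)$ to $\omega=\mathcal M S(-1)\omega+v$ and use that $\mathcal M$ commutes with $S$ (hence with $R(\lambda,S)$) together with the identity $\lambda R(\lambda,S)S(-1)=R(\lambda,S)+S(-1)$, valid for $|\lambda|\neq 1$, to obtain
\begin{equation*}
(\lambda-\mathcal M)R(\lambda,S)\omega=\mathcal M S(-1)\omega+\lambda R(\lambda,S)v.
\end{equation*}
Fix $\lambda_0\in\Gamma\setminus(\sigma_\Gamma(M)\cup\sigma(f))$. By the two inputs above, $R(\lambda,\mathcal M)$ is analytic near $\lambda_0$ and $\lambda R(\lambda,S)v$ extends analytically across $\lambda_0$, while $\mathcal M S(-1)\omega$ is constant in $\lambda$; hence
\begin{equation*}
R(\lambda,S)\omega=R(\lambda,\mathcal M)\big(\mathcal M S(-1)\omega+\lambda R(\lambda,S)v\big)
\end{equation*}
defines an analytic extension of $\mathcal S\omega$ to a neighborhood of $\lambda_0$, so $\lambda_0\notin\sigma(\omega)$.

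Finally I would pass from $\omega$ to $u$ through the evaluation map $p:C_r\to\X$, $p(w):=w(0)$, exactly as in the proof of Lemma \ref{lem spectral inclusion}: since $p\circ\omega=u$ and $p\circ S^k\omega=S^k u$, expanding $R(\lambda,S)$ in a Neumann series for $|\lambda|>1$ (and the analogous expansion for $0<|\lambda|<1$) yields $p\circ R(\lambda,S)\omega=R(\lambda,S)u$, so the analytic extension of $\mathcal S\omega$ at $\lambda_0$ produces one for $\mathcal S u$. This gives $\lambda_0\notin\sigma(u)$, and as $\lambda_0\in\Gamma\setminus(\sigma_\Gamma(M)\cup\sigma(f))$ was arbitrary, the inclusion follows. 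The whole argument is a transcription of Lemma \ref{lem spectral inclusion}; the only genuinely new ingredient is $\sigma(v)\subset\sigma(f)$, which is precisely where the phase-space variation-of-constants term is swapped for $Lf$. Consequently the sole obstacle is ensuring that the preceding lemma really delivers a bounded $L$ commuting with $S$ — once that is in hand, every remaining step transfers verbatim.
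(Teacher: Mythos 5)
Your proposal is correct and follows exactly the paper's own route: the paper's proof consists precisely of noting $\sigma(v)=\sigma(Lf)\subset\sigma(f)$ from the boundedness of $L$ and its commutation with $S$, and then declaring the rest ``similar to'' the proof of the earlier spectral-inclusion lemma, which is the argument you transcribe in detail (the resolvent identity, the analyticity of $R(\lambda,\mathcal M)$ off $\sigma_\Gamma(M)$ via Lemma \ref{lem 5.2}, and the passage from $\omega$ to $u$ via the evaluation map $p$). Your write-up is in fact a faithful and more explicit expansion of what the paper leaves implicit.
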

\begin{proof}
Since $L$ is linear, bounded and commutes with $S$ we have  $\sigma (v) =\sigma (Lf) \subset \sigma (f)$. The rest of the proof is similar to that of Lemma \ref{spec est}.
\end{proof}
All main results of the previous section, Theorems \ref{the main}, \ref{cor main1} and  \ref{the main 2}
will follow if we adjust the technique of decomposition as discussed in \cite{naiminshi} to periodic evolutionary processes. 

\bibliographystyle{amsplain}

\begin{thebibliography}{10}



\bibitem{bathutrab}C. J. K. Batty, W. Hutter, F. R\"abiger,
Almost periodicity of mild solutions of inhomogeneous periodic Cauchy problems,
{\it J. Differential Equations} {\bf 156} (1999), 309-327.


\bibitem{burzha}
T.A. Burton, B. Zhang, 
Periodic solutions of abstract differential equations with infinite delay,
{\it J. Differential Equations} {\bf 90} (1991), no. 2, 357--396.

\bibitem{chilas}
C. Chicone, Yu. Latushkin, 
"Evolution Semigroups in Dynamical Systems and Differential Equations".
Mathematical Surveys and Monographs, {\bf 70}, American Mathematical Society, Providence, RI, 1999.


\bibitem{fin} A.M. Fink, "Almost Periodic Differential Equations", Lecture Notes in Math.,
{\bf 377}, Springer, Berlin-New York, 1974.

\bibitem{furnaimin} T. Furumochi, T. Naito, Nguyen Van Minh,
Boundedness and almost periodicity of solutions of partial functional differential equations, {\it J. Differential Equations}, {\bf 180} (2002), no. 1, 125-152.

\bibitem{hatkri} L. Hatvani, T. Kristin, On the existence of periodic solutions for linear inhomogeneous and quasilinear functional differential equations, {\it J. Differential Equations} {\bf 97}(1992), 1-15.

\bibitem{hen} D. Henry, { "Geometric Theory of Semilinear Parabolic Equations"}, Lecture Notes
in Math., Springer-Verlag, Berlin-New York, 1981.

\bibitem{hinnaiminshi}
Y. Hino, T. Naito, N.V. Minh, J.S. Shin,
"Almost Periodic Solutions of Differential Equations in Banach Spaces". Taylor \& Francis. LOndon \& New York 2001.


\bibitem{levzhi} B. M. Levitan, V. V.  Zhikov, 
{"Almost Periodic Functions and
Differential Equations"}, Moscow Univ. Publ. House 1978. English
translation by Cambridge University Press 1982.

\bibitem{lilimli}Y. Li, Z. Lim and Z. Li, A Massera type criterion for linear 
functional differential equations with advance 
and delay, J. Math. Appl., {\bf 200} (1996), 715-725.

\bibitem{mas} J.L. Massera, The existence of periodic solutions of systems of 
differential equations, {\it Duke Math. J.}{\bf 17} (1950). 457--475.


\bibitem{minrabsch}
N. V. Minh, F. R\"abiger, R. Schnaubelt, 
Exponential stability, exponential expansiveness, and exponential dichotomy of evolution equations on the half-line, 
{\it Integral Equations Operator Theory} {\bf 32} (1998), no. 3, 332-353. 

\bibitem{min}
Nguyen Van Minh, Asymptotic behavior of individual orbits of discrete systems, {\it Proceedings of the A.M.S.} {\bf 137} (2009), no. 9, 3025-3035.

\bibitem{mingasste} Nguyen Van Minh, G. N'Guerekata, S. Siegmund,
Circular spectrum and bounded solutions of periodic evolution equations,
{\it J. Differential Equations}
{\bf 246}, (2009), no 8, 3089-3108.

\bibitem{miykimnaishi}
R. Miyazaki, D. Kim, T. Naito and J.S. Shin, Fredholm operators, evolution semigroups,
and periodic solutions of nonlinear periodic systems, {\it J. Differential Equations}, 
{\bf 257} (2014), 4214-4247.


\bibitem{murnaimin}
S. Murakami, T. Naito, N.V. Minh, Evolution semigroups and sums of commuting
operators: a new approach to the admissibility theory of function spaces,
{\it J. Differential Equations} {\bf 164} (2000), 240-285.

\bibitem{murnaimin2}
S. Murakami, T. Naito, Nguyen Van Minh,
Massera's theorem for almost periodic solutions of functional differential equations, {\it Journal of the Math Soc. of Japan}, {\bf 47} (2004), no. 1, 247-268.


\bibitem{naimin} T. Naito, N. V. Minh, Evolution semigroups and spectral
criteria for almost periodic solutions of periodic evolution equations, {\it J. Differential Equations}
{\bf 152} (1999), 358-376.

\bibitem{naiminmiyshi} T. Naito, N. V. Minh, R. Miyazaki, J.S. Shin,{
A decomposition theorem for bounded solutions and the existence of periodic
solutions to periodic differential equations}, {\it J. Differential Equations} {\bf 160} (2000), 263-282.

\bibitem{naiminshi} T. Naito, N. V. Minh, J. S. Shin, { New spectral criteria for
almost periodic solutions of evolution equations}, {\it Studia Mathematica}, {\bf 145} (2001), 97-111.

\bibitem{pru} J. Pr\"uss, { "Evolutionary Integral Equations and Applications"}, Birkh\"auser, Basel, 1993..

\bibitem{ruevu}
W. M. Ruess, Q. P. Vu, Asymptotically almost periodic solutions of evolution equations in Banach spaces,
{\it J. Differential Equations} {\bf 122} (1995), no. 2, 282--301.


\bibitem{shinai}
J.S. Shin, T. Naito, Semi-Fredholm operators and periodic solutions for linear 
functional differential equations, {\it J. Differential Equations} {\bf 153} (1999), 407-441.   

\bibitem{traweb}
C.C. Travis, G.F. Webb, Existence and stability 
for partial functional differential equations, 
{\it Trans. Amer. Math. Soc.}, {\bf 200} (1974), 394-418.

\bibitem{vusch}
Q.P. Vu and E. Sch\"uler, { The operator equation
 $AX-XB = C$ , stability and asymptotic behaviour of  
differential equations}, {\it J. Differential Equations} {\bf 145} (1998), 394-419.  

\bibitem{wu} 
J. Wu, {"Theory and Applications of Partial Functional 
Differential Equations "}, Applied Math. Sci. {\bf 119}, Springer, Berlin- New York , 1996.



\end{thebibliography}

\end{document}